\documentclass[12pt,oneside,english]{amsart}

\usepackage[T1]{fontenc}
\usepackage[utf8]{inputenc}
\usepackage{lmodern}
\usepackage{babel}
\usepackage{amsmath,amssymb,amsthm,mathtools}
\usepackage{booktabs}
\usepackage{array}
\usepackage{graphicx}
\usepackage{geometry}
\usepackage{tikz}
\usepackage{tikz-3dplot}
\usetikzlibrary{positioning, calc, arrows.meta, decorations.markings, intersections, backgrounds, 3d, shapes.geometric, fit}
\usepackage[colorlinks=true,linkcolor=blue,citecolor=blue,urlcolor=blue]{hyperref}

\graphicspath{{Images/}}

\numberwithin{equation}{section}
\numberwithin{figure}{section}
\numberwithin{table}{section}

\theoremstyle{plain}
\newtheorem{theorem}{Theorem}[section]
\newtheorem{proposition}[theorem]{Proposition}
\newtheorem{lemma}[theorem]{Lemma}
\theoremstyle{definition}
\newtheorem{definition}[theorem]{Definition}
\theoremstyle{remark}
\newtheorem{remark}[theorem]{Remark}

\newcommand{\ZZ}{\mathbb Z}
\newcommand{\QQ}{\mathbb Q}
\newcommand{\RR}{\mathbb R}
\newcommand{\CC}{\mathbb C}
\newcommand{\HH}{\mathbb H}
\newcommand{\Oct}{\mathbb O}
\newcommand{\KK}{\mathbb Q(\sqrt5)}
\newcommand{\Zphi}{\mathbb Z[\varphi]}
\newcommand{\Tr}{\operatorname{Tr}}
\newcommand{\Norm}{\operatorname{N}}

\newcommand{\Span}{\operatorname{span}}

\newcommand{\Rea}{\operatorname{Re}}

\begin{document}

\title[Non-crystallographic systems of integers]{Non-crystallographic systems of integers over composition algebras}
\author{Daniele Corradetti}
\address{Grupo de F\'\i sica Matem\'atica, Instituto Superior T\'ecnico, Av. Rovisco Pais, 1049-001 Lisboa, Portugal}
\email{danielecorradetti@tecnico.ulisboa.pt}
\address{Departamento de Matematica, Universidade do Algarve, Campus de Gambelas, 8005-139 Faro, Portugal}
\email{a55499@ualg.pt}
\subjclass[2020]{11R04; 17A35; 17A75; 20F55; 52C23}
\keywords{composition algebras, root systems, golden integers, icosians, quasicrystals}

\begin{abstract}
In this work we revisit classical systems of integers inside the real normed division algebras from the point of view of finite norm shells and root systems. Building on the icosian framework of Moody--Patera and on the integral root-system viewpoint of Chen--Moody--Patera and of Johnson, we isolate the precise axiomatic ingredients of the non-crystallographic analogue: an order over the golden ring \(\Zphi\) together with a distinguished finite root shell whose Cartan coefficients lie in \(\Zphi\). We show that the usual Gaussian, Eisenstein, Hamilton, Hurwitz and Coxeter--Dickson examples are recovered by separating the order, its units, and its distinguished finite shells; once the lattice requirement is replaced by a finite root-shell requirement, the golden integer ring becomes the natural coefficient ring for the non-crystallographic cases \(H_2\) and \(H_4\). We then construct a weak golden octonion order by Cayley--Dickson doubling of the icosian ring; the resulting free rank-\(8\) \(\Zphi\)-order has a \(240\)-element finite shell of type \(H_4\oplus H_4\) and its multiplication is genuinely octonionic. Finally, we prove (i) that this weak double is self-dual with respect to the polar norm pairing, hence has no strict norm-integral overorder, and (ii) that the first trace-integral discriminant tower over it contains no octonion-stable nonzero isotropic gluing.
\end{abstract}

\maketitle
\tableofcontents

\section{Introduction and Motivations}

Unital composition algebras, and in particular the four real normed division algebras \(\RR\), \(\CC\), \(\HH\) and \(\Oct\), are among the most fascinating places where algebra, geometry and symmetry meet. They give rise to normed products, involutions, exceptional lattices, reflection groups and, in the octonionic case, to a rich non-associative geometry. The existence of integral elements in these algebras is therefore not merely an arithmetic question, but also a way of spotting finite geometric configurations inside continuous algebras.

The classical story starts from the usual integers \cite{Gau} and continues with Gaussian integers, Eisenstein integers, Hamilton quaternions \cite{Hu19}, Hurwitz quaternions and integral Cayley numbers. Dickson's formulation \cite{Di23} asks for subsets closed under addition and multiplication, containing \(1\), and enjoying integral trace and norm. Coxeter's classification of reflection groups \cite{Coxeter1934,Co46} and Johnson's more recent reorganization \cite{Jo13,Jo17,Jo18} then emphasized a more rigid feature: the finite unit shells of the most symmetric examples recover crystallographic lattices and root systems. The geometry of finite reflection groups behind this picture is standard \cite{Humphreys}, and the dictionary between quaternionic and octonionic orders, on the one hand, and discrete reflection groups, on the other, has been extensively developed for octonions \cite{CoSm}.

The following synoptic table summarizes the crystallographic landscape that will be reviewed in this work. It is important to read the column ``shell'' as the distinguished finite norm shell and not as the whole order.

\begin{table}[h]
\centering
\resizebox{\textwidth}{!}{%
\begin{tabular}{llllll}
\toprule
Name & Algebra & Order symbol & Shell size & Multiplicative object & Root type \\
\midrule
Integers & \(\RR\) & \(\ZZ\) & \(2\) & \(C_2\) & \(A_1\) \\
Gaussian & \(\CC\) & \(\ZZ[i]\) & \(4\) & \(C_4\) & \(A_1\oplus A_1\) \\
Eisenstein & \(\CC\) & \(\ZZ[\omega]\) & \(6\) & \(C_6\) & \(A_2\) \\
Hamilton & \(\HH\) & \(\ZZ[i,j,k]\) & \(8\) & \(Q_8\) & \(A_1^{\oplus4}\) \\
Hybrid & \(\HH\) & \(\ZZ[\omega]\oplus \ZZ[\omega]j\) & \(12\) & \(\operatorname{Dic}_3\) & \(A_2\oplus A_2\) \\
Hurwitz & \(\HH\) & \(\mathcal H\) & \(24\) & \(2T\) & \(D_4\) \\
Graves--Cayley & \(\Oct\) & \(\mathcal C\) & \(16\) & Moufang loop & \(A_1^{\oplus8}\) \\
Eisenstein octaves & \(\Oct\) & \(\mathcal E\) & \(24\) & Moufang loop & \(4A_2\) \\
Coupled Hurwitz & \(\Oct\) & \(\mathcal K\) & \(48\) & Moufang loop & \(2D_4\) \\
Coxeter--Dickson & \(\Oct\) & \(\mathcal O_{E_8}\) & \(240\) & Moufang loop & \(E_8\) \\
\bottomrule
\end{tabular}
}
\caption{\emph{This table summarizes the classical crystallographic systems of integer elements over real normed division algebras. The non-associative rows must be interpreted as Moufang-loop shells, not as groups.}}
\label{tab:crystallographic-summary}
\end{table}

While the octonionic case leads to the exceptional root system \(E_8\), the icosian case leads to the non-crystallographic root system \(H_4\). This analogy is the guiding principle of the present work. In the crystallographic case the coefficient ring is usually \(\ZZ\), and the roots generate a genuine lattice. In the non-crystallographic case the Cartan coefficients are no longer integers, but algebraic integers in the golden ring \(\Zphi\). Thus the relevant object is no longer a lattice over \(\ZZ\), but a finite root shell over \(\Zphi\).

The golden side of the story can be summarized as follows.

\begin{table}[h]
\centering
\resizebox{\textwidth}{!}{%
\begin{tabular}{llll}
\toprule
Object & Arithmetic support & Distinguished shell & Role in this work \\
\midrule
\(H_2\) & \(\ZZ[\zeta_{10}]\), real part \(\Zphi\) & \(10\) roots & decagonal model and Penrose arithmetic \\
\(H_3\) & \(\Zphi\)-coordinate module & \(30\) roots & icosahedral star geometry \\
Icosian ring & \(\mathbb I\subset\HH(K)\) & \(120\) roots & quaternionic model of \(H_4\) \\
Icosian double & \(\mathbb G=\mathbb I+\mathbb I\ell\) & \(H_4\oplus H_4\) & weak golden octonion order \\
Strict G3 over \(G_0\) & norm-polar \(\Zphi\)-dual & no new overorder & closed by \(G_0^\#=G_0\) \\
Trace G3-B' tower & trace-polar \(\ZZ\)-dual & no stable \(\sqrt5\)-gluing & bounded no-go inside the \(G_0\) tower \\
\bottomrule
\end{tabular}
}
\caption{\emph{This table summarizes the non-crystallographic arithmetic landscape considered in the present work.}}
\label{tab:noncrystallographic-summary}
\end{table}

The two tables above are summarized graphically in Fig.~\ref{fig:nc-tower}: each Cayley--Dickson step climbs one row, while passing from the left column to the right column amounts to replacing the coefficient ring $\ZZ$ with the golden ring $\Zphi$.

\begin{figure}[htbp]
\centering
\begin{tikzpicture}[
  every node/.style={font=\small},
  alg/.style={font=\bfseries\small, anchor=center},
  box/.style={
    rectangle, rounded corners=3pt, draw=black, thick,
    fill=white, align=center, minimum width=3.2cm, minimum height=0.95cm,
    inner sep=3pt,
  },
  cdarrow/.style={-{Latex[length=2mm]}, black, thick},
  hr/.style={dashed, black},
]
\node[font=\small\bfseries, black] at (0,3.7) {Crystallographic};
\node[font=\small\bfseries, black] at (5.0,3.7) {Non-crystallographic};
\node[font=\small\itshape, black] at (0,3.2) {coefficient ring $\ZZ$};
\node[font=\small\itshape, black] at (5.0,3.2) {coefficient ring $\Zphi$};

\node[alg, anchor=east] at (-2.5,2.4) {$\RR,\,\CC$:};
\node[alg, anchor=east] at (-2.5,1.0) {$\HH$:};
\node[alg, anchor=east] at (-2.5,-0.4) {$\Oct$:};

\node[box] (c1) at (0,2.4) {$\ZZ[i],\;\ZZ[\omega]$\\\footnotesize $|S_1|=4,\,6$};
\node[box] (c2) at (0,1.0) {Hurwitz $\mathcal H$\\\footnotesize $|S_1|=24$, type $D_4$};
\node[box] (c3) at (0,-0.4) {Coxeter--Dickson $\mathcal O_{E_8}$\\\footnotesize $|S_1|=240$, type $E_8$};

\node[box] (n1) at (5.0,2.4) {cyclotomic $H_2$ in $\ZZ[\zeta_{10}]$\\\footnotesize $|S_{H_2}|=10$};
\node[box] (n2) at (5.0,1.0) {icosian ring $\mathbb I$\\\footnotesize $|S_{H_4}|=120$, type $H_4$};
\node[box] (n3) at (5.0,-0.4) {icosian double $\mathbb G=\mathbb I+\mathbb I\ell$\\\footnotesize $|S_{\mathbb G}|=240$, type $H_4\oplus H_4$};

\draw[cdarrow] (c1) -- node[right, font=\footnotesize, black]{C-D} (c2);
\draw[cdarrow] (c2) -- node[right, font=\footnotesize, black]{C-D} (c3);
\draw[cdarrow] (n1) -- node[right, font=\footnotesize, black]{C-D} (n2);
\draw[cdarrow] (n2) -- node[right, font=\footnotesize, black]{C-D} (n3);

\draw[->, thick, dashed, black] (c1.east) -- (n1.west);
\draw[->, thick, dashed, black] (c2.east) -- (n2.west);
\draw[->, thick, dashed, black] (c3.east) -- (n3.west);

\node[font=\footnotesize, black] at (2.5,-1.6)
  {C-D: Cayley--Dickson doubling};
\end{tikzpicture}
\caption{\emph{The two-column landscape of integer systems over composition algebras. The left column collects the classical crystallographic shells over $\ZZ$, climbing by Cayley--Dickson doubling from the complex examples up to the $E_8$ shell of the Coxeter--Dickson octonions. The right column is the golden analogue obtained by replacing $\ZZ$ with the golden ring $\Zphi$: cyclotomic $H_2$, icosian $H_4$, and icosian-double $H_4\oplus H_4$. The horizontal dashed arrows mark the level at which the lattice condition over $\ZZ$ is replaced by a finite root-shell condition over $\Zphi$.}}
\label{fig:nc-tower}
\end{figure}

In this work we isolate and formalize an integral root-shell axiomatization for the non-crystallographic case. The construction is deliberately modest: it is not a classification theorem. Rather, it is a framework that recovers the crystallographic examples, explains why \(\Zphi\) replaces \(\ZZ\), accounts for the concrete cases \(H_2\) and \(H_4\), and produces a full-rank octonionic order by doubling the icosian ring. We then prove that the strict overorder version of the stronger G3 problem is closed for this weak double: its norm-polar \(\Zphi\)-dual coincides with the order itself. Under the relaxed trace-integral convention, the direct mixed half-root ansatz and the first \(\sqrt5\)-discriminant tower are also excluded.

\smallskip
\noindent\emph{Relation to existing work.} The icosian ring and its connection with the \(H_4\) root system go back to Tits \cite{Ti80} and Wilson \cite{Wi86}; the role of icosians in the arithmetic of quasicrystals was developed by Moody--Patera \cite{Icosians} and is treated systematically in Conway--Sloane \cite{Conway} and Baake--Grimm \cite{Aperiodic,Ba98}. The non-crystallographic root systems \(H_2,H_3,H_4\) as such are classical \cite{Humphreys}; their integral structure over \(\Zphi\) was studied in Chen--Moody--Patera \cite{CMP}. The present article does not aim to surpass these references in scope or depth; its contribution is twofold. First, it makes precise an axiomatization (Definition~\ref{def:nc-root-shell}) that separates the order, its multiplicative unit object, and the finite root shell, and shows that this is the correct three-tier object once \(\ZZ\) is replaced by \(\Zphi\). Second, it produces a positive rank-\(8\) golden octonion order by Cayley--Dickson doubling of the icosian ring (Proposition~\ref{prop:weak-golden-octonion}) and a sequence of four obstruction theorems (Propositions~\ref{prop:bounded-g3-no-go}--\ref{prop:g3-b-tower-no-go}) limiting how this weak double can be enlarged to a stronger octonionic analogue. 
The present work is structured as follows. In Sec. 2 we introduce orders, trace, norm and root shells. In Sec. 3 we review the crystallographic integer systems over \(\RR,\CC,\HH\) and \(\Oct\). In Sec. 4 we recall root systems, crystallographicity and the correct lattice condition. In Sec. 5 we develop the golden arithmetic of \(\Zphi\). In Sec. 6 we define non-crystallographic root-shell systems and present the \(H_2\), icosian \(H_4\), and icosian-double \(H_4\oplus H_4\) examples, including the strict G3 no-go theorem and the bounded trace-integral G3-B' no-go theorem over the weak double. In Sec. 7 we discuss the interplay with quasicrystals, aperiodic algebras and physical applications. Finally, in Sec. 8 we collect conclusions and future developments. The numerical computation used for the finite enumerations is described after the acknowledgments.

\section{Orders, traces, norms and root shells}

Before defining non-crystallographic integer systems, one has to separate three objects that are often conflated: the full order, its multiplicative unit shell, and the root system carried by a distinguished finite shell. This separation is harmless in the classical examples and indispensable in the golden examples, where the full unit group of \(\Zphi\) is infinite.

\begin{definition}
\label{def:order}
Let \(R\) be a commutative ring, typically \(R=\ZZ\) or \(R=\Zphi\). Let \(A\) be a unital composition algebra over the fraction field of \(R\), equipped with an involution \(x\mapsto \overline{x}\), trace
\begin{equation}
\Tr_A(x)=x+\overline{x},
\label{eq:composition-trace}
\end{equation}
and quadratic norm
\begin{equation}
\Norm_A(x)=x\overline{x}.
\label{eq:composition-norm}
\end{equation}
An \emph{\(R\)-order} in \(A\) is a subset \(\mathcal O\subset A\) such that:
\begin{enumerate}
\item \(\mathcal O\) is a finitely generated \(R\)-module;
\item \(1\in\mathcal O\);
\item \(\mathcal O\) is closed under addition and under every (bracketed) product of finitely many of its elements; in the alternative octonionic case this is equivalent, by Artin's theorem, to closure under all products of any two elements;
\item \(\overline{x}\in\mathcal O\) for every \(x\in\mathcal O\);
\item \(\Tr_A(x),\Norm_A(x)\in R\) for every \(x\in\mathcal O\).
\end{enumerate}
For an alternative algebra the closure clause in (3) is equivalently expressed by requiring that \(\mathcal O\cdot\mathcal O\subset\mathcal O\) in the sense of pointwise products. We refer to \cite{Co46,CoSm,Conway} for the classical examples and to \cite{Di23} for the original formulation.
\end{definition}

In analogy to the octonionic case, the order is generally infinite, whereas the most geometric information is concentrated in finite shells.

\begin{definition}
Let \(\mathcal O\) be an \(R\)-order in \(A\). For \(m\in R\), the \emph{norm shell} of value \(m\) is
\begin{equation}
S_m(\mathcal O)=\{x\in\mathcal O:\Norm_A(x)=m\}.
\label{eq:norm-shell}
\end{equation}
A \emph{root shell} is a finite subset \(S\subset S_m(\mathcal O)\) that carries a root-system structure with respect to the bilinear form
\begin{equation}
\langle x,y\rangle=\Rea(x\overline{y}).
\label{eq:inner-product-algebra}
\end{equation}
\end{definition}

\begin{remark}
The unit group or unit loop of an order is not the same thing as a root shell. In the classical Hurwitz examples the norm-one shell is finite and therefore both notions can coincide. In the golden case, however, \(\Zphi^\times=\{\pm\varphi^k:k\in\ZZ\}\) is infinite, while \(H_2\) and \(H_4\) are finite root systems. This is the central distinction of the paper.
\end{remark}

The following elementary criterion is the computational form of the definition.

\begin{proposition}[Order criterion]
Let \(b_1,\ldots,b_n\) be an \(R\)-basis of a finitely generated \(R\)-module \(\mathcal O\subset A\), with \(b_1=1\). Assume that
\begin{equation}
b_i b_j=\sum_{k=1}^n c_{ij}^k b_k,\qquad c_{ij}^k\in R,
\label{eq:structure-constants}
\end{equation}
and
\begin{equation}
\overline{b_i}=\sum_{k=1}^n d_i^k b_k,\qquad d_i^k\in R.
\label{eq:conjugation-constants}
\end{equation}
Assume moreover that the coordinate polynomials for \(\Tr_A(x)\) and \(\Norm_A(x)\), written in the basis \(b_i\), have coefficients in \(R\). Then \(\mathcal O\) is an \(R\)-order.
\end{proposition}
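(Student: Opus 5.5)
We verify the five clauses of Definition~\ref{def:order} in turn, using only bilinearity of the product, \(R\)-linearity of the involution, and the polynomial form of trace and norm. Clause (1) holds by hypothesis, since \(\mathcal O\) is spanned over \(R\) by \(b_1,\ldots,b_n\). Clause (2) holds because \(b_1=1\).

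For clause (3), write \(x=\sum_i x_ib_i\) and \(y=\sum_j y_jb_j\) with \(x_i,y_j\in R\). The product in \(A\) is bilinear over the fraction field of \(R\), so
\[
xy=\sum_{i,j}x_iy_j\,b_ib_j=\sum_k\Bigl(\sum_{i,j}x_iy_jc_{ij}^k\Bigr)b_k .
\]
By \eqref{eq:structure-constants} every coefficient lies in \(R\), so \(\mathcal O\cdot\mathcal O\subset\mathcal O\). Any bracketed product of finitely many elements is built by iterating binary products. An induction on the number of factors, or equivalently on the depth of the bracketing, therefore gives closure under every bracketed product. No associativity is needed, which is the point in the octonionic case, and the remark about Artin's theorem is not required for this direction.

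For clause (4), the involution is linear over the base field, hence \(R\)-linear. Thus \(\overline{x}=\sum_i x_i\overline{b_i}=\sum_k\bigl(\sum_i x_id_i^k\bigr)b_k\in\mathcal O\) by \eqref{eq:conjugation-constants}.

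For clause (5), \(\Tr_A\) is linear and \(\Norm_A\) is a quadratic form on \(A\). In the coordinates \(x_1,\ldots,x_n\) they are given by polynomials, of degree \(1\) and \(2\) respectively. By hypothesis these polynomials have coefficients in \(R\). Evaluating a polynomial with coefficients in \(R\) at coordinates in \(R\) gives a value in \(R\), so \(\Tr_A(x),\Norm_A(x)\in R\).

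The only step needing care is clause (5), and only in interpreting the hypothesis. The coordinate polynomials are the ones obtained from the identities
\[
\Tr_A(x)=\sum_i x_i\Tr_A(b_i),\qquad
\Norm_A(x)=\sum_i x_i^2\Norm_A(b_i)+\sum_{i<j}x_ix_j\,\Tr_A\!\left(b_i\overline{b_j}\right).
\]
The hypothesis therefore amounts to requiring \(\Tr_A(b_i)\in R\), \(\Norm_A(b_i)\in R\) and \(\Tr_A(b_i\overline{b_j})\in R\) for all \(i,j\). This is exactly what gets checked in practice.

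It is worth noting that integrality of the trace already follows from clauses (3) and (4). Indeed \(x+\overline{x}\in\mathcal O\cap F\cdot 1\), and the coefficient of \(b_1=1\) lies in \(R\) provided \(F\cdot1\cap\mathcal O=R\cdot1\), which is automatic because \(1\) is a basis element. The norm condition, however, genuinely requires the stated hypothesis, and we simply invoke it.
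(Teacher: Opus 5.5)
Your proof is correct and follows the same route as the paper's: a direct clause-by-clause check of Definition~\ref{def:order} from the structure constants, the conjugation constants and the coefficient hypothesis, with your bracket-depth induction and explicit coordinate formulas filling in what the paper leaves implicit. One side remark is wrong, though harmless since you invoke the hypothesis anyway: the norm hypothesis is just as redundant as the trace one, because \(\Norm_A(x)=x\overline{x}\) lies in \(\mathcal O\) by clauses (3) and (4) and also in \(F\cdot 1\), hence lies in \(\mathcal O\cap F\cdot1=R\cdot1\) by exactly the argument you gave for \(\Tr_A(x)\).
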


\begin{proof}
The \(R\)-module structure gives closure under addition. Equation \eqref{eq:structure-constants} gives closure under multiplication, and \eqref{eq:conjugation-constants} gives stability under the involution. The last assumption gives \(\Tr_A(x),\Norm_A(x)\in R\) for a general element \(x=\sum_i r_i b_i\), with \(r_i\in R\). Hence all conditions in the definition of an \(R\)-order are satisfied. \(\square\)
\end{proof}

\section{Crystallographic integer systems over composition algebras}

We now review the classical cases, but in the language of orders and shells. This is not meant as a new classification of integral elements; rather, it is the crystallographic part of the dictionary that will be used later.

\subsection{Gaussian and Eisenstein integers}

Let \(i^2=-1\). The Gaussian order is
\begin{equation}
\ZZ[i]=\{a+bi:a,b\in\ZZ\}\subset\CC.
\label{eq:gaussian-order}
\end{equation}
Its trace and norm are
\begin{equation}
\Tr(a+bi)=2a,\qquad \Norm(a+bi)=a^2+b^2.
\label{eq:gaussian-trace-norm}
\end{equation}
The unit shell is
\begin{equation}
S_1(\ZZ[i])=\{\pm1,\pm i\}.
\label{eq:gaussian-units}
\end{equation}
Geometrically this is the square root shell, which we will consistently denote by \(A_1\oplus A_1\) (the rotation-group label \(C_4\) is used only for the multiplicative unit object).

Let
\begin{equation}
\omega=e^{2\pi i/3},\qquad \omega^2+\omega+1=0.
\label{eq:eisenstein-omega}
\end{equation}
The Eisenstein order is
\begin{equation}
\ZZ[\omega]=\{a+b\omega:a,b\in\ZZ\}.
\label{eq:eisenstein-order}
\end{equation}
The norm-one shell is
\begin{equation}
S_1(\ZZ[\omega])=\{\pm1,\pm\omega,\pm\omega^2\},
\label{eq:eisenstein-units}
\end{equation}
which has cardinality \(6\), not \(3\). With complex multiplication it is the cyclic group \(C_6\), and as a root shell it realizes \(A_2\). The two-dimensional crystallographic shells, together with the non-crystallographic decagonal teaser, are collected in Fig.~\ref{fig:crys-gallery}.

\begin{figure}[htbp]
\centering
\begin{tikzpicture}[
  every node/.style={font=\small},
  scale=1.1,
  v/.style={circle, fill=black, inner sep=1.5pt},
  refl/.style={black!50, dashed, thin},
]
\begin{scope}[xshift=0cm]
\pgfmathsetmacro{\Ra}{1.0}
\draw[black!40, thin] (0,0) circle (\Ra);
\foreach \k in {0,1,2,3} {
  \pgfmathsetmacro{\ang}{\k*90}
  \draw[refl] ({-1.35*cos(\ang)},{-1.35*sin(\ang)})
           -- ({ 1.35*cos(\ang)},{ 1.35*sin(\ang)});
  \node[v] at ({\Ra*cos(\ang)},{\Ra*sin(\ang)}) {};
}
\node[anchor=north, font=\footnotesize, align=center] at (0,-1.7)
  {(a) $\ZZ[i]$, $|S_{1}|=4$\\type $A_{1}\oplus A_{1}$};
\end{scope}

\begin{scope}[xshift=4.0cm]
\pgfmathsetmacro{\Rb}{1.0}
\draw[black!40, thin] (0,0) circle (\Rb);
\foreach \k in {0,1,2} {
  \pgfmathsetmacro{\ang}{\k*60}
  \draw[refl] ({-1.35*cos(\ang)},{-1.35*sin(\ang)})
           -- ({ 1.35*cos(\ang)},{ 1.35*sin(\ang)});
}
\foreach \k in {0,...,5} {
  \pgfmathsetmacro{\ang}{\k*60}
  \node[v] at ({\Rb*cos(\ang)},{\Rb*sin(\ang)}) {};
}
\node[anchor=north, font=\footnotesize, align=center] at (0,-1.7)
  {(b) $\ZZ[\omega]$, $|S_{1}|=6$\\type $A_{2}$};
\end{scope}

\begin{scope}[xshift=8.0cm]
\pgfmathsetmacro{\Rc}{1.0}
\draw[black!40, thin] (0,0) circle (\Rc);
\foreach \k in {0,1,2,3,4} {
  \pgfmathsetmacro{\ang}{\k*36+18}
  \draw[refl] ({-1.35*cos(\ang)},{-1.35*sin(\ang)})
           -- ({ 1.35*cos(\ang)},{ 1.35*sin(\ang)});
}
\foreach \k in {0,...,9} {
  \pgfmathsetmacro{\ang}{\k*36}
  \node[v] at ({\Rc*cos(\ang)},{\Rc*sin(\ang)}) {};
}
\node[anchor=north, font=\footnotesize, align=center] at (0,-1.7)
  {(c) $\ZZ[\zeta_{10}]$, $|S_{H_{2}}|=10$\\type $H_{2}$ (over $\Zphi$)};
\end{scope}
\end{tikzpicture}
\caption{\emph{Two-dimensional unit shells of the smallest composition-algebra orders. Panel (a) shows the four Gaussian units forming the reducible crystallographic root system $A_{1}\oplus A_{1}$; panel (b) shows the six Eisenstein units forming the $A_{2}$ shell. Panel (c) anticipates the non-crystallographic case: the ten roots of $H_{2}$ realized inside $\ZZ[\zeta_{10}]$, with Cartan coefficients that lie in $\Zphi$ but not in $\ZZ$.}}
\label{fig:crys-gallery}
\end{figure}

\begin{remark}
If one keeps only \(\{1,\omega,\omega^2\}\), one obtains the oriented rotation subgroup \(C_3\), not the full \(A_2\) root system.
\end{remark}

\subsection{Hamilton, hybrid and Hurwitz quaternions}

Let \(\HH=\RR\langle i,j,k\rangle\), with \(i^2=j^2=k^2=ijk=-1\). The Hamilton order is the \(\ZZ\)-span
\begin{equation}
\mathcal H_0=\ZZ[1,i,j,k],
\label{eq:hamilton-order}
\end{equation}
which is an order but not the maximal order of \(\HH\) (the maximal order is the Hurwitz order \(\mathcal H\) introduced below).
For \(x=a+bi+cj+dk\) one has
\begin{equation}
\Tr(x)=2a,\qquad \Norm(x)=a^2+b^2+c^2+d^2.
\label{eq:hamilton-trace-norm}
\end{equation}
Its norm-one shell is
\begin{equation}
\{\pm1,\pm i,\pm j,\pm k\},
\label{eq:hamilton-units}
\end{equation}
which is the quaternion group \(Q_8\).

The Hurwitz order can be written as
\begin{equation}
\mathcal H=\ZZ[1,u,v,w],
\label{eq:hurwitz-order}
\end{equation}
where
\begin{align}
u&=\frac{1}{2}(1-i-j+k),&
v&=\frac{1}{2}(1+i-j-k),&
w&=\frac{1}{2}(1-i+j-k).
\label{eq:hurwitz-basis}
\end{align}
Its \(24\) norm-one elements are
\begin{equation}
\{\pm1,\pm i,\pm j,\pm k\}\cup
\left\{\frac{1}{2}(\pm1\pm i\pm j\pm k)\right\},
\label{eq:hurwitz-units}
\end{equation}
with all choices of signs in the second set. They form the binary tetrahedral group \(2T\) and the \(D_4\) root shell.

Finally, the hybrid quaternionic order is naturally written as
\begin{equation}
\ZZ[\omega]\oplus \ZZ[\omega]j.
\label{eq:hybrid-order}
\end{equation}
Its \(12\)-element unit shell is
\begin{equation}
\{\pm1,\pm\omega,\pm\omega^2,\pm j,\pm\omega j,\pm\omega^2j\}.
\label{eq:hybrid-units}
\end{equation}
This shell realizes \(A_2\oplus A_2\). With quaternionic multiplication it is the dicyclic group \(\operatorname{Dic}_3\), equivalently the semidirect form in which an order-four element acts by inversion on the Eisenstein rotations.

\subsection{Graves--Cayley and Coxeter--Dickson octonions}

The octonionic examples require more care because \(\Oct\) is not associative. Thus the finite multiplicative shells are not groups but Moufang loops. This is perfectly analogous to the way in which the octonionic unit sphere is multiplicative but not associative.

Let
\begin{equation}
\{1,i,j,k,l,il,jl,kl\}
\label{eq:octonion-dickson-basis}
\end{equation}
be a Dickson basis of \(\Oct\), with multiplication fixed by the Fano-plane convention. The Graves--Cayley order is the \(\ZZ\)-span of this basis. Its norm-one shell is
\begin{equation}
\{\pm1,\pm i,\pm j,\pm k,\pm l,\pm il,\pm jl,\pm kl\}.
\label{eq:cayley-graves-units}
\end{equation}
The presence of \(-1\) is essential; without it the shell is not closed under taking opposites and cannot be a root system.

Coxeter's integral Cayley numbers \cite{Co46} give octonionic orders whose minimal shells have \(240\) elements and realize the \(E_8\) root system. In one Dickson convention this order can be generated by replacing one basis element with a half-sum such as
\begin{equation}
h=\frac{i+j+k+l}{2},
\label{eq:coxeter-half-sum}
\end{equation}
which has \(\Norm(h)=1\) and therefore belongs to the unit shell of the order.
The exact list of the \(240\) elements is best treated as a supplementary table or appendix, because the mathematical point is that the finite shell is the \(E_8\) shell and the multiplication gives a Moufang loop. The Petrie projection of $E_8$, displayed in Fig.~\ref{fig:e8-petrie}, illustrates the thirty-fold Coxeter symmetry that connects $E_8$ to the icosian shell of Sec.~6.

\begin{figure}[htbp]
\centering
\begin{tikzpicture}[
  every node/.style={font=\small},
  scale=2.4,
  v/.style={circle, fill=black, inner sep=0.65pt},
  vouter/.style={circle, fill=black, inner sep=0.9pt},
  e/.style={black!55, very thin, opacity=0.55},
  hilight/.style={black, thin},
]
\pgfmathsetmacro{\RR}{1.95}
\def\rA{0.20*\RR}
\def\rB{0.36*\RR}
\def\rC{0.50*\RR}
\def\rD{0.62*\RR}
\def\rE{0.74*\RR}
\def\rF{0.84*\RR}
\def\rG{0.93*\RR}
\def\rH{1.00*\RR}

\foreach \rr in {\rA,\rB,\rC,\rD,\rE,\rF,\rG,\rH} {
  \draw[black!18, very thin] (0,0) circle (\rr);
}

\foreach \k in {0,...,29} {
  \pgfmathsetmacro{\angA}{\k*12+0}
  \pgfmathsetmacro{\angB}{\k*12+6}
  \pgfmathsetmacro{\angC}{\k*12+0}
  \pgfmathsetmacro{\angD}{\k*12+6}
  \pgfmathsetmacro{\angE}{\k*12+0}
  \pgfmathsetmacro{\angF}{\k*12+6}
  \pgfmathsetmacro{\angG}{\k*12+0}
  \pgfmathsetmacro{\angH}{\k*12+6}
  \coordinate (Va\k) at ({\rA*cos(\angA)},{\rA*sin(\angA)});
  \coordinate (Vb\k) at ({\rB*cos(\angB)},{\rB*sin(\angB)});
  \coordinate (Vc\k) at ({\rC*cos(\angC)},{\rC*sin(\angC)});
  \coordinate (Vd\k) at ({\rD*cos(\angD)},{\rD*sin(\angD)});
  \coordinate (Ve\k) at ({\rE*cos(\angE)},{\rE*sin(\angE)});
  \coordinate (Vf\k) at ({\rF*cos(\angF)},{\rF*sin(\angF)});
  \coordinate (Vg\k) at ({\rG*cos(\angG)},{\rG*sin(\angG)});
  \coordinate (Vh\k) at ({\rH*cos(\angH)},{\rH*sin(\angH)});
}

\foreach \k in {0,...,29} {
  \pgfmathtruncatemacro{\next}{mod(\k+1,30)}
  \draw[e] (Vh\k) -- (Vh\next);
  \draw[e] (Vh\k) -- (Vg\k);
  \draw[e] (Vh\k) -- (Vg\next);
  \draw[e] (Vg\k) -- (Vf\k);
  \draw[e] (Vg\k) -- (Vf\next);
  \draw[e] (Vf\k) -- (Ve\k);
  \draw[e] (Vf\k) -- (Ve\next);
  \draw[e] (Ve\k) -- (Vd\k);
  \draw[e] (Vd\k) -- (Vc\k);
  \draw[e] (Vc\k) -- (Vb\k);
  \draw[e] (Vb\k) -- (Va\k);
}

\foreach \k in {0,...,29} {
  \pgfmathtruncatemacro{\nn}{mod(\k+11,30)}
  \draw[hilight] (Vh\k) -- (Vh\nn);
}

\foreach \k in {0,...,29} {
  \node[v] at (Va\k) {};
  \node[v] at (Vb\k) {};
  \node[v] at (Vc\k) {};
  \node[v] at (Vd\k) {};
  \node[v] at (Ve\k) {};
  \node[v] at (Vf\k) {};
  \node[v] at (Vg\k) {};
  \node[vouter] at (Vh\k) {};
}
\end{tikzpicture}
\caption{\emph{Petrie projection of the $E_{8}$ root system in the Coxeter plane. The $240$ roots fall into eight concentric $30$-gons of $30$ vertices each, and the highlighted $\{30/11\}$ star polygon on the outer ring traces the orbit of a single root under the Coxeter element. In the present paper this picture is the crystallographic terminus on the left column of Fig.~\ref{fig:nc-tower}: replacing $\ZZ$ with $\Zphi$ inside the icosian ring recovers the non-crystallographic shell $H_{4}\oplus H_{4}$ of cardinality $240$ shown in Fig.~\ref{fig:h4-h4-decomposition}.}}
\label{fig:e8-petrie}
\end{figure}

\begin{remark}
The octonionic review also explains why the word ``order'' must be used with some care in non-associative algebras. Closure under multiplication is still meaningful, and alternative laws are enough for the standard inverse and Moufang-loop statements, but one must not silently import associative ring language.
\end{remark}

\section{Root systems, lattices and crystallographicity}

In order to define a non-crystallographic analogue of integral elements, we need the correct crystallographic condition. This is the point where the ordinary lattice condition over \(\ZZ\) must be replaced by a golden integrality condition.

\begin{definition}
Let \(E\) be a Euclidean space with inner product \(\langle\cdot,\cdot\rangle\). A finite subset \(\Phi\subset E\setminus\{0\}\) is a \emph{reduced root system} if
\begin{equation}
\Phi\cap\RR\alpha=\{\pm\alpha\}
\label{eq:root-line-condition}
\end{equation}
for every \(\alpha\in\Phi\), and if for every \(\alpha\in\Phi\) the reflection
\begin{equation}
r_\alpha(v)=v-\frac{2\langle v,\alpha\rangle}{\langle \alpha,\alpha\rangle}\alpha
\label{eq:root-reflection}
\end{equation}
preserves \(\Phi\). All root systems considered below are reduced.
\end{definition}

The group generated by the reflections \(r_\alpha\) is a Weyl group in the crystallographic cases and, more generally, a Coxeter reflection group \cite{Humphreys}. For \(H_2,H_3,H_4\) we will use the term Coxeter group.

\begin{definition}
A reduced root system \(\Phi\) is \emph{crystallographic} if
\begin{equation}
\frac{2\langle \beta,\alpha\rangle}{\langle \alpha,\alpha\rangle}\in\ZZ
\label{eq:crystallographic-condition}
\end{equation}
for every \(\alpha,\beta\in\Phi\). Its root lattice is
\begin{equation}
\Lambda(\Phi)=\Span_{\ZZ}(\Phi)
=\left\{\sum_{i=1}^r n_i\alpha_i:n_i\in\ZZ,\ \alpha_i\in\Phi\right\}.
\label{eq:root-lattice}
\end{equation}
\end{definition}

\begin{remark}
The factor \(2\) in \eqref{eq:crystallographic-condition} is not cosmetic. For \(A_2\), two simple roots of equal length satisfy
\begin{equation}
\frac{\langle\alpha,\beta\rangle}{\langle\alpha,\alpha\rangle}=-\frac12,
\qquad
\frac{2\langle\alpha,\beta\rangle}{\langle\alpha,\alpha\rangle}=-1.
\label{eq:a2-factor-two}
\end{equation}
Without this factor the hexagonal root system would incorrectly fail to be crystallographic.
\end{remark}

\begin{figure}[htbp]
\centering
\begin{tikzpicture}[
  every node/.style={font=\small},
  rootarrow/.style={-{Latex[length=2.5mm]}, very thick, black},
  axisline/.style={black!40, thin},
]
\begin{scope}[xshift=0cm]
\draw[axisline] (-2.0,0) -- (2.0,0);
\draw[axisline] (0,-1.6) -- (0,1.8);

\coordinate (Oa) at (0,0);
\coordinate (alphaA) at (1.6,0);
\coordinate (betaA) at ({1.6*cos(120)}, {1.6*sin(120)});

\draw[rootarrow] (Oa) -- (alphaA) node[right, black] {$\alpha$};
\draw[rootarrow] (Oa) -- (betaA) node[above left, black] {$\beta$};

\draw[thin, black] (0.5,0) arc[start angle=0, end angle=120, radius=0.5];
\node[black] at (0.65,0.45) {\footnotesize $\tfrac{2\pi}{3}$};

\node[align=center, font=\footnotesize, black] at (0,-2.0)
  {$A_2:\quad \dfrac{2\langle\beta,\alpha\rangle}{\langle\alpha,\alpha\rangle}=2\cos\tfrac{2\pi}{3}=-1\in\ZZ$};
\node[align=center, font=\small\bfseries, black] at (0,2.3) {Crystallographic};
\end{scope}

\begin{scope}[xshift=6.5cm]
\draw[axisline] (-2.0,0) -- (2.0,0);
\draw[axisline] (0,-1.6) -- (0,1.8);

\coordinate (Oh) at (0,0);
\coordinate (alphaH) at (1.6,0);
\coordinate (betaH) at ({1.6*cos(144)}, {1.6*sin(144)});

\draw[rootarrow] (Oh) -- (alphaH) node[right, black] {$\alpha$};
\draw[rootarrow] (Oh) -- (betaH) node[above left, black] {$\beta$};

\draw[thin, black] (0.5,0) arc[start angle=0, end angle=144, radius=0.5];
\node[black] at (0.05,0.65) {\footnotesize $\tfrac{4\pi}{5}$};

\node[align=center, font=\footnotesize, black] at (0,-2.0)
  {$H_2:\quad \dfrac{2\langle\beta,\alpha\rangle}{\langle\alpha,\alpha\rangle}=2\cos\tfrac{4\pi}{5}=-\varphi\in\Zphi\setminus\ZZ$};
\node[align=center, font=\small\bfseries, black] at (0,2.3) {Non-crystallographic};
\end{scope}
\end{tikzpicture}
\caption{\emph{Cartan-coefficient comparison between the smallest crystallographic and non-crystallographic rank-two root systems. On the left, the $A_2$ simple roots subtend $2\pi/3$ and produce the integer Cartan coefficient $-1$. On the right, the $H_2$ simple roots subtend $4\pi/5$ and produce the golden Cartan coefficient $-\varphi$, which lies in $\Zphi$ but not in $\ZZ$. This is the obstruction that forces the replacement of $\ZZ$ by $\Zphi$ as the natural coefficient ring.}}
\label{fig:cartan-a2-vs-h2}
\end{figure}

The non-crystallographic obstruction is not that some root angle is an irrational multiple of \(\pi\): the angles of \(H_2=I_2(5)\) are multiples of \(\pi/5\). Rather, the obstruction lies in the Cartan coefficients themselves: they need not be in \(\ZZ\), but they lie in \(\Zphi\). Consequently the ordinary \(\ZZ\)-span of the root system can have rational rank larger than the real dimension and may become dense in the underlying Euclidean space. The correct replacement is therefore not a lattice over \(\ZZ\) but a finite root shell over the algebraic integer ring \(\Zphi\).

The smallest non-crystallographic example is
\begin{equation}
H_2=\left\{\left(\cos\frac{k\pi}{5},\sin\frac{k\pi}{5}\right):0\leq k<10\right\}.
\label{eq:h2-decagon}
\end{equation}
The three-dimensional example \(H_3\) has \(30\) roots,
\begin{align}
H_3={}&
\left\{(\pm1,0,0)\text{ and all permutations}\right\}
\notag\\
&\cup
\left\{\frac12(\pm1,\pm\varphi,\pm\varphi^{-1})\text{ and all even permutations}\right\}.
\label{eq:h3-roots}
\end{align}
Geometrically these \(30\) roots are the vertices of the icosidodecahedron, not the vertices of the icosahedron.

Finally, \(H_4\) is realized by the \(120\) roots of the \(600\)-cell:
\begin{align}
H_4={}&
\left\{(\pm1,0,0,0)\text{ and all permutations}\right\}
\notag\\
&\cup
\left\{\frac12(\pm1,\pm1,\pm1,\pm1)\right\}
\notag\\
&\cup
\left\{\frac12(0,\pm1,\pm\varphi,\pm\varphi^{-1})\text{ and all even permutations}\right\}.
\label{eq:h4-roots}
\end{align}

The $120$-vertex realization of $H_4$ as the $600$-cell is visualized in the Petrie projection of Fig.~\ref{fig:600cell-petrie}.

\begin{figure}[htbp]
\centering
\begin{tikzpicture}[
  every node/.style={font=\small},
  scale=2.6,
  v/.style={circle, fill=black, inner sep=0.7pt},
  vouter/.style={circle, fill=black, inner sep=0.95pt},
  e/.style={black!55, very thin},
  starchord/.style={black, thin},
]
\pgfmathsetmacro{\RA}{2*sin(11*180/30)}
\pgfmathsetmacro{\RB}{2*sin( 7*180/30)}
\pgfmathsetmacro{\RC}{2*sin( 3*180/30)}
\pgfmathsetmacro{\RD}{2*sin( 1*180/30)}

\draw[black!18, very thin] (0,0) circle (\RA);
\draw[black!18, very thin] (0,0) circle (\RB);
\draw[black!18, very thin] (0,0) circle (\RC);
\draw[black!18, very thin] (0,0) circle (\RD);

\foreach \k in {0,...,29} {
  \pgfmathsetmacro{\ang}{\k*12}
  \coordinate (A\k) at ({\RA*cos(\ang)},{\RA*sin(\ang)});
}
\foreach \k in {0,...,29} {
  \pgfmathsetmacro{\ang}{\k*12+6}
  \coordinate (B\k) at ({\RB*cos(\ang)},{\RB*sin(\ang)});
}
\foreach \k in {0,...,29} {
  \pgfmathsetmacro{\ang}{\k*12}
  \coordinate (C\k) at ({\RC*cos(\ang)},{\RC*sin(\ang)});
}
\foreach \k in {0,...,29} {
  \pgfmathsetmacro{\ang}{\k*12+6}
  \coordinate (D\k) at ({\RD*cos(\ang)},{\RD*sin(\ang)});
}

\foreach \k in {0,...,29} {
  \pgfmathtruncatemacro{\next}{mod(\k+11,30)}
  \draw[starchord] (A\k) -- (A\next);
}

\foreach \k in {0,...,29} {
  \pgfmathtruncatemacro{\prevk}{mod(\k+29,30)}
  \draw[e] (A\k) -- (B\k);
  \draw[e] (A\k) -- (B\prevk);
}
\foreach \k in {0,...,29} {
  \pgfmathtruncatemacro{\prevk}{mod(\k+29,30)}
  \draw[e] (B\k) -- (C\k);
  \draw[e] (B\k) -- (C\prevk);
  \pgfmathtruncatemacro{\nextk}{mod(\k+1,30)}
  \draw[e] (B\k) -- (C\nextk);
}
\foreach \k in {0,...,29} {
  \pgfmathtruncatemacro{\prevk}{mod(\k+29,30)}
  \draw[e] (C\k) -- (D\k);
  \draw[e] (C\k) -- (D\prevk);
}
\foreach \k in {0,...,29} {
  \pgfmathtruncatemacro{\nextk}{mod(\k+11,30)}
  \draw[e] (D\k) -- (D\nextk);
}

\foreach \k in {0,...,29} {
  \node[vouter] at (A\k) {};
  \node[v]      at (B\k) {};
  \node[v]      at (C\k) {};
  \node[v]      at (D\k) {};
}
\end{tikzpicture}
\caption{\emph{Petrie projection of the $600$-cell, whose $120$ vertices realize the root shell $S_{H_4}$. The vertices fall into four concentric $30$-gons whose radii are $2\sin(m\pi/30)$ for $m\in\{11,7,3,1\}$; the highlighted chords trace the $\{30/11\}$ Petrie star polygon. The thirty-fold rotational symmetry visible in the picture is the projection of the Coxeter element of $H_4$ acting on the $600$-cell.}}
\label{fig:600cell-petrie}
\end{figure}

\section{Golden arithmetic and Dirichlet integers}

We now isolate the arithmetic that replaces \(\ZZ\) in the non-crystallographic examples. The relevant field is not mysterious: it is the real quadratic field generated by the golden ratio.

Let
\begin{equation}
K=\KK,\qquad \varphi=\frac{1+\sqrt5}{2}.
\label{eq:golden-field}
\end{equation}
The nontrivial Galois conjugation is denoted by \(x\mapsto x^*\), and it sends
\begin{equation}
\varphi^*=1-\varphi=-\varphi^{-1}.
\label{eq:phi-conjugate}
\end{equation}

\begin{proposition}
The ring of integers of \(K\) is
\begin{equation}
\mathcal O_K=\Zphi.
\label{eq:ring-of-integers-golden}
\end{equation}
For \(x=a+b\varphi\), with \(a,b\in\ZZ\), one has
\begin{equation}
x^*=a+b(1-\varphi),
\label{eq:golden-conjugation}
\end{equation}
\begin{equation}
\Tr_{K/\QQ}(x)=x+x^*=2a+b,
\label{eq:golden-trace}
\end{equation}
and
\begin{equation}
\Norm_{K/\QQ}(x)=xx^*=a^2+ab-b^2.
\label{eq:golden-field-norm}
\end{equation}
Moreover
\begin{equation}
\mathcal O_K^\times=\{\pm\varphi^k:k\in\ZZ\}.
\label{eq:golden-units}
\end{equation}
\end{proposition}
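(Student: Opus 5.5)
The proof splits into four parts: identify the ring of integers, do the two conjugation and trace/norm computations, and determine the units. The last part is the only one with real content.

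\emph{Ring of integers.} Since \(5\equiv 1\pmod 4\), the standard description of quadratic integer rings gives \(\mathcal O_K=\ZZ\!\left[\tfrac{1+\sqrt5}{2}\right]=\Zphi\). To keep things self-contained, I would argue directly. Take \(x=u+v\sqrt5\in K\) integral over \(\ZZ\). Its minimal polynomial \(t^2-2u\,t+(u^2-5v^2)\) must have integer coefficients. So \(2u\in\ZZ\) and \(u^2-5v^2\in\ZZ\). From \(4u^2\in\ZZ\) we get \(20v^2\in\ZZ\), hence \(2v\in\ZZ\). Reducing mod \(4\) then shows \(2u\) and \(2v\) have the same parity, which is exactly the condition \(x\in\ZZ+\ZZ\varphi\). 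Conversely, \(\varphi\) is integral because it satisfies \(\varphi^2=\varphi+1\).

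\emph{Conjugation, trace and norm.} These are direct computations. The Galois automorphism is \(\ZZ\)-linear and fixes \(\ZZ\), so \(x^*=a+b\varphi^*=a+b(1-\varphi)\), using \eqref{eq:phi-conjugate}. Adding gives the trace \(2a+b\). For the norm, expand \((a+b\varphi)(a+b-b\varphi)\) and use \(\varphi^2=\varphi+1\): the \(\varphi\)-terms cancel and leave \(a^2+ab-b^2\).

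\emph{Units.} This step is the main obstacle, since it is the only place a genuine finiteness or minimality argument is needed. An element \(x\in\Zphi\) is a unit exactly when \(\Norm_{K/\QQ}(x)=\pm1\), by multiplicativity of the norm and because \(x^{-1}=\pm x^*\). Since \(\Norm(\varphi)=-1\), every \(\pm\varphi^k\) is a unit. For the converse, fix a unit \(\varepsilon\); replacing it by \(\pm\varepsilon^{\pm1}\), we may assume \(\varepsilon>1\). Choose \(k\ge0\) with \(\varphi^k\le\varepsilon<\varphi^{k+1}\), and set \(\eta=\varepsilon\varphi^{-k}\). Then \(1\le\eta<\varphi\) and \(\eta\) is a unit. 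I would show that no unit lies strictly between \(1\) and \(\varphi\). Write \(\eta=a+b\varphi\). Then \(\eta^*=\pm\eta^{-1}\in(-1,1)\), so \(\eta-\eta^*=b\sqrt5\) lies in \((0,\varphi+1)\), which forces \(b=1\). Next, \(\eta+\eta^*=2a+1\) lies in \((0,\varphi+1)\), which forces \(a=0\) or \(a=1\). The candidates are therefore \(\varphi\), already excluded by \(\eta<\varphi\), and \(1+\varphi=\varphi^2\), which is too large. Hence \(\eta=1\) and \(\varepsilon=\varphi^k\). Alternatively, one could simply cite Dirichlet's unit theorem together with the fact that \(\varphi\) is the fundamental unit.
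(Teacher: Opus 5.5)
Your proof is correct and has the same four-part structure as the paper's. It is genuinely more elementary, because the paper handles the two substantive claims by citation: the standard description of quadratic rings of integers for $5\equiv1\pmod4$, and Dirichlet's unit theorem ``with $\varphi$ a fundamental unit''. You prove both directly.

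Your ring-of-integers argument (integer coefficients of $t^2-2ut+(u^2-5v^2)$, then $5(2v)^2\in\ZZ\Rightarrow 2v\in\ZZ$, then the parity condition mod $4$) is the standard proof behind that citation.

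The more interesting difference is in the units. Dirichlet's theorem by itself only gives $\mathcal O_K^\times=\{\pm\varepsilon_0^k\}$ for \emph{some} fundamental unit $\varepsilon_0$, and the paper simply asserts that $\varepsilon_0=\varphi$. Your ``no unit in $(1,\varphi)$'' argument, via $\eta-\eta^*=b\sqrt5$ and $\eta+\eta^*=2a+b$, is exactly that missing verification. Combined with the Archimedean choice of $k$, it makes Dirichlet's theorem unnecessary. The paper's route buys brevity; yours buys a complete, citation-free proof that includes the fundamental-unit claim the paper leaves unproved.

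Three cosmetic tidy-ups:
\begin{enumerate}
\item Phrase the integrality step with the characteristic polynomial rather than the minimal polynomial, so that the case $v=0$ is covered.
\item Normalize to $\varepsilon\ge1$ rather than $\varepsilon>1$; the case $\varepsilon=\pm1$ is trivial.
\item Note that $2a+1$ is an odd integer in $(0,\varphi+1)\subset(0,3)$. This already forces $a=0$, so the candidate $\varphi^2$ never arises.
\end{enumerate}
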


\begin{proof}
Since \(5\equiv1\pmod4\), the ring of integers of \(\QQ(\sqrt5)\) is \(\ZZ[(1+\sqrt5)/2]\); see, e.g., Marcus, \emph{Number Fields}, Ch.~2 or Serre \cite{Serre}. Equations \eqref{eq:golden-conjugation}, \eqref{eq:golden-trace} and \eqref{eq:golden-field-norm} follow by direct multiplication using \(\varphi^2=\varphi+1\). The unit assertion is Dirichlet's unit theorem for the real-quadratic field \(K\), of free rank one, with \(\varphi\) a fundamental unit and \(\Norm_{K/\QQ}(\varphi)=-1\). \(\square\)
\end{proof}

A useful geometric picture of \(\Zphi\) is obtained by the Minkowski embedding \(a+b\varphi\mapsto(a+b\varphi,a+b\varphi^{*})\), shown in Fig.~\ref{fig:zphi-minkowski}. The ring is dense in \(\RR\) along the first coordinate but discrete as a rank-two lattice in \(\RR^{2}\).

\begin{figure}[htbp]
\centering
\begin{tikzpicture}[
  every node/.style={font=\small},
  scale=0.78,
  lattpt/.style={circle, fill=black, inner sep=1.2pt},
  axisline/.style={->, thick, black},
]
\pgfmathsetmacro{\PHI}{(1+sqrt(5))/2}
\pgfmathsetmacro{\PHIS}{1-\PHI}

\draw[axisline] (-4.6,0) -- (4.8,0) node[right, black] {$\sigma_1: x \mapsto x$};
\draw[axisline] (0,-3.6) -- (0,3.8) node[above, black] {$\sigma_2: x \mapsto x^{*}$};

\foreach \a in {-3,-2,-1,0,1,2,3} {
  \foreach \b in {-3,-2,-1,0,1,2,3} {
    \pgfmathsetmacro{\xx}{\a + \b*\PHI}
    \pgfmathsetmacro{\yy}{\a + \b*\PHIS}
    \ifdim \xx pt > -4.6pt
    \ifdim \xx pt < 4.6pt
    \ifdim \yy pt > -3.6pt
    \ifdim \yy pt < 3.6pt
      \node[lattpt] at (\xx,\yy) {};
    \fi\fi\fi\fi
  }
}

\pgfmathsetmacro{\onex}{1}
\pgfmathsetmacro{\oney}{1}
\pgfmathsetmacro{\phix}{\PHI}
\pgfmathsetmacro{\phiy}{\PHIS}

\draw[->, thick, black] (0,0) -- (\onex,\oney);
\draw[->, thick, black] (0,0) -- (\phix,\phiy);

\node[anchor=south west, font=\footnotesize, black] at (\onex,\oney) {$1$};
\node[anchor=north east, font=\footnotesize, black] at (\phix,\phiy) {$\varphi$};

\draw[<->, thin, black!60, dashed]
  (-4.2,-0.07) -- (4.6,-0.07);
\node[font=\footnotesize, anchor=west, black] at (4.7,-0.3)
  {dense in $\RR$ along $\sigma_1$};

\fill[black!8]
  (0,0) -- (1,1) -- (1+\PHI,1+\PHIS) -- (\PHI,\PHIS) -- cycle;
\draw[black!55, thin]
  (0,0) -- (1,1) -- (1+\PHI,1+\PHIS) -- (\PHI,\PHIS) -- cycle;

\end{tikzpicture}
\caption{\emph{The ring $\Zphi$ embedded as a rank-two lattice in $\RR^2$ via the Minkowski map $a+b\varphi\mapsto(a+b\varphi,\,a+b\varphi^{*})$. The two arrows are the images of the $\ZZ$-basis $\{1,\varphi\}$; the shaded parallelogram is a fundamental cell. Projection on the first coordinate $\sigma_1$ recovers the dense subset $\Zphi\subset\RR$, while the lattice itself is discrete in $\RR^2$. This duality, governed by the Galois action $\varphi^{*}=1-\varphi$, is the arithmetic engine behind the cut-and-project description of decagonal and icosahedral quasicrystals.}}
\label{fig:zphi-minkowski}
\end{figure}

There is one more map which appears in the icosian literature, especially in Tits' construction \cite{Ti80}. It is useful, but it is not a field norm.

\begin{definition}
The \emph{Dirichlet height}, or \emph{Tits projection}, is the map
\begin{equation}
n_\varphi(a+b\varphi)=a.
\label{eq:tits-height}
\end{equation}
Equivalently,
\begin{equation}
n_\varphi(x)=\kappa x+(\kappa x)^*,
\qquad
\kappa=\frac{2}{5+\sqrt5}=\frac{3-\varphi}{5}.
\label{eq:tits-height-kappa}
\end{equation}
\end{definition}

\begin{remark}
The equivalence of \eqref{eq:tits-height} and \eqref{eq:tits-height-kappa} is a one-line check. With \(\kappa=(3-\varphi)/5\) one has \(\kappa^*=(2+\varphi)/5\). For \(x=a+b\varphi\) we obtain \(\kappa x=(3a-b+(2b-a)\varphi)/5\) and \(\kappa^* x^*=(2a+b+(a-2b)\varphi)/5\). Their sum is \(a\), confirming \(n_\varphi(x)=a\).
\end{remark}

\begin{remark}
The positive composition norm on \(K\subset\RR\), the field norm \(\Norm_{K/\QQ}\), and the Dirichlet height \(n_\varphi\) are different objects. The field norm is multiplicative but indefinite; the Dirichlet height is a projection; the positive square \(x^2\) is not the algebraic field norm.
\end{remark}

\section{Non-crystallographic root-shell systems}

We can now state the replacement for Johnson's lattice-based definition. The guiding analogy is simple: in the crystallographic case, the finite shell carries a root system over \(\ZZ\); in the golden case, the finite shell carries a root system whose Cartan coefficients lie in \(\Zphi\).

\begin{definition}
\label{def:nc-root-shell}
Let \(R=\Zphi\), and let \(A\) be a composition algebra defined over \(K=\KK\), equipped with an involution and a quadratic norm \(\Norm_A\). Fix the real embedding \(K\hookrightarrow\RR\) sending \(\varphi\) to \((1+\sqrt5)/2\), so that \(A\otimes_K\RR\) inherits a Euclidean inner product
\begin{equation}
\langle x,y\rangle_{\RR}=\Rea(x\overline{y}).
\label{eq:real-inner-product}
\end{equation}
A \emph{non-crystallographic integral root-shell system} is a pair \((\mathcal O,S)\) such that:
\begin{enumerate}
\item \(\mathcal O\) is an \(R\)-order in \(A\);
\item \(S\subset \mathcal O\) is finite;
\item \(S=-S\);
\item \(S\) is contained in a fixed norm shell \(S_m(\mathcal O)\);
\item for every \(\alpha\in S\), the reflection
\begin{equation}
r_\alpha\colon v\longmapsto v-\frac{2\langle v,\alpha\rangle_{\RR}}{\langle\alpha,\alpha\rangle_{\RR}}\alpha,
\qquad v\in A\otimes_K\RR,
\label{eq:nc-reflection}
\end{equation}
maps \(S\) to itself;
\item for all \(\alpha,\beta\in S\),
\begin{equation}
\frac{2\langle \beta,\alpha\rangle_{\RR}}{\langle\alpha,\alpha\rangle_{\RR}}\in\Zphi.
\label{eq:golden-cartan}
\end{equation}
\end{enumerate}
\end{definition}

\subsection{\texorpdfstring{\(H_2\)}{H2} and cyclotomic integers}

Let \(\zeta_{10}=e^{\pi i/5}\). The decagonal shell
\begin{equation}
S_{H_2}=\{\zeta_{10}^k:0\leq k<10\}\subset\CC
\label{eq:h2-cyclotomic-shell}
\end{equation}
is naturally contained in the cyclotomic order \(\ZZ[\zeta_{10}]\). Its real inner products lie in the maximal real subfield, whose ring of integers is \(\Zphi\).

\begin{proposition}
\label{prop:h2-cartan}
The shell \(S_{H_2}\) is a non-crystallographic integral root shell over \(\Zphi\).
\end{proposition}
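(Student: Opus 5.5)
The plan is to verify the six clauses of Definition~\ref{def:nc-root-shell} directly. The ambient data need some care first. The algebra \(A\) is \(L=\QQ(\zeta_{10})\), viewed as a \(2\)-dimensional composition algebra over \(K=\KK\) with involution given by complex conjugation. The order \(\mathcal O\) is \(\ZZ[\zeta_{10}]\), viewed as a \(\Zphi\)-module.

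The first step is to exhibit \(\mathcal O\) as a free \(\Zphi\)-module with basis \(b_1=1\), \(b_2=\zeta\), where \(\zeta=\zeta_{10}\). The key identity is
\[
\zeta+\zeta^{-1}=2\cos\tfrac{\pi}{5}=\varphi .
\]
Since \(\zeta\) is a unit, this gives
\[
\zeta^2=\varphi\zeta-1,\qquad \overline{\zeta}=\zeta^{-1}=\varphi-\zeta .
\]
These relations supply the structure constants \eqref{eq:structure-constants} and conjugation constants \eqref{eq:conjugation-constants} in \(\Zphi\). It also follows that \(\Zphi[\zeta]=\ZZ[\zeta]\), since \(\varphi=\zeta+\zeta^{-1}\in\ZZ[\zeta]\) and \(\zeta\) has degree \(2\) over \(K\).

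For \(x=a+b\zeta\) with \(a,b\in\Zphi\), a direct computation gives
\[
\Tr(x)=2a+b\varphi,\qquad \Norm(x)=a^2+ab\varphi+b^2 .
\]
Both are polynomials with coefficients in \(\Zphi\), so the Order criterion proposition shows that \(\mathcal O\) is a \(\Zphi\)-order, which is clause (1).

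Clauses (2)–(4) are immediate. The set \(S_{H_2}\) has ten elements, \(-1=\zeta^5\) gives \(S_{H_2}=-S_{H_2}\), and every \(\zeta^k\) has norm \(1\). Hence \(S_{H_2}\subset S_1(\mathcal O)\).

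For clause (5) I will use the complex form of the reflection for a unit \(\alpha\). Since
\[
2\langle v,\alpha\rangle_\RR=v\overline\alpha+\overline v\alpha
\quad\text{and}\quad
\alpha\overline\alpha=1,
\]
the reflection simplifies to
\[
r_\alpha(v)=v-(v\overline\alpha+\overline v\alpha)\alpha=-\overline v\,\alpha^2 .
\]
Taking \(v=\zeta^j\) and \(\alpha=\zeta^k\) gives
\[
r_\alpha(v)=-\zeta^{2k-j}=\zeta^{5+2k-j}\in S_{H_2}.
\]

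Clause (6) reduces to the Cartan values
\[
2\langle\zeta^j,\zeta^k\rangle_\RR=2\cos\tfrac{(j-k)\pi}{5}\in\{\pm2,\pm\varphi,\pm(\varphi-1)\}\subset\Zphi .
\]
To see that these are the only values, note that \(2\cos(2\pi/5)=\zeta^2+\zeta^{-2}=\varphi^2-2=\varphi-1\). Non-crystallographicity then follows because the value \(-\varphi\), attained at \(j-k=4\), does not lie in \(\ZZ\) (compare Fig.~\ref{fig:cartan-a2-vs-h2}).

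The only step needing real care is the first, namely placing \(\ZZ[\zeta_{10}]\) correctly into the framework of Definition~\ref{def:order} over \(R=\Zphi\). The remaining clauses are one-line cyclotomic computations. The main point is to identify the real subring \(\ZZ[\zeta]\cap\RR\) with \(\Zphi\) via \(\zeta+\zeta^{-1}=\varphi\), so that traces, norms and inner products all land in \(\Zphi\).
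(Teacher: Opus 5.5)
Your proposal is correct and follows the same clause-by-clause verification as the paper: finiteness, \(S_{H_2}=-S_{H_2}\) via \(\zeta^5=-1\), reflection invariance, and the Cartan values \(2\cos(k\pi/5)\in\{\pm2,\pm\varphi,\pm(\varphi-1)\}\) with \(-\varphi\notin\ZZ\). It is somewhat more complete than the paper's proof, which never checks clause (1) that \(\ZZ[\zeta_{10}]=\Zphi\oplus\Zphi\zeta\) is a \(\Zphi\)-order (you do this via \(\zeta^2=\varphi\zeta-1\) and \(\overline{\zeta}=\varphi-\zeta\)), and which gets reflection invariance from the dihedral group \(I_2(5)\) rather than from your explicit formula \(r_\alpha(v)=-\overline{v}\,\alpha^2\).
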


\begin{proof}
The set \(S_{H_2}=\{\zeta_{10}^k:0\leq k<10\}\) is finite and satisfies \(S_{H_2}=-S_{H_2}\) because \(\zeta_{10}^{k+5}=-\zeta_{10}^k\). The reflections in the lines orthogonal to its elements generate the dihedral Coxeter group \(I_2(5)\) of order \(10\), hence preserve \(S_{H_2}\). For the Cartan coefficients we use the fact that the angle between any two roots of \(I_2(5)\) is a multiple of \(\pi/5\); the only inner-product values that occur are
\begin{equation}
2\cos\frac{k\pi}{5}\in\{2,\;\varphi,\;\varphi-1,\;-\varphi+1,\;-\varphi,\;-2\},\qquad k=0,1,\ldots,5,
\end{equation}
and \(\varphi,\;\varphi-1=\varphi^{-1}\in\Zphi\). Therefore
\begin{equation}
\frac{2\langle\beta,\alpha\rangle_{\RR}}{\langle\alpha,\alpha\rangle_{\RR}}\in\Zphi
\qquad\text{for every pair }\alpha,\beta\in S_{H_2}.
\label{eq:h2-cartan}
\end{equation}
Not all values lie in \(\ZZ\), since \(\varphi\notin\ZZ\); this is the ordinary non-crystallographicity. \(\square\)
\end{proof}

The decagonal shell and its reflection geometry are summarized in Fig.~\ref{fig:h2-decagon}.

\begin{figure}[htbp]
\centering
\begin{tikzpicture}[
  every node/.style={font=\small},
  scale=1.3,
  root/.style={circle, fill=black, inner sep=1.6pt},
  rline/.style={very thick, black, -{Latex[length=2.5mm]}},
  refl/.style={black!55, dashed, thin},
]
\pgfmathsetmacro{\R}{1.7}

\foreach \k in {0,1,2,3,4} {
  \pgfmathsetmacro{\ang}{\k*36+18}
  \draw[refl] ({-(\R+0.4)*cos(\ang)},{-(\R+0.4)*sin(\ang)})
            -- ({ (\R+0.4)*cos(\ang)},{ (\R+0.4)*sin(\ang)});
}

\draw[black!30, thin] (0,0) circle (\R);

\foreach \k in {0,...,9} {
  \pgfmathsetmacro{\ang}{\k*36}
  \node[root] (r\k) at ({\R*cos(\ang)},{\R*sin(\ang)}) {};
}

\pgfmathsetmacro{\angA}{0}
\pgfmathsetmacro{\angB}{144}

\draw[rline] (0,0) -- ({\R*cos(\angA)},{\R*sin(\angA)});
\draw[rline] (0,0) -- ({\R*cos(\angB)},{\R*sin(\angB)});

\node[anchor=west, font=\footnotesize, black] at ({\R*cos(\angA)+0.1},{\R*sin(\angA)}) {$\alpha$};
\node[anchor=south east, font=\footnotesize, black] at ({\R*cos(\angB)-0.05},{\R*sin(\angB)+0.05}) {$\beta$};

\draw[thin, black] (0.45,0) arc[start angle=0, end angle=144, radius=0.45];
\node[font=\footnotesize, black] at (0.1,0.7) {$\tfrac{4\pi}{5}$};

\foreach \k in {0,...,9} {
  \pgfmathsetmacro{\ang}{\k*36}
  \pgfmathsetmacro{\rlab}{\R+0.22}
  \node[anchor=center, font=\scriptsize, black!70] at ({\rlab*cos(\ang)},{\rlab*sin(\ang)}) {$\zeta_{10}^{\k}$};
}

\end{tikzpicture}
\caption{\emph{The decagonal shell $S_{H_2}=\{\zeta_{10}^{k}:0\le k<10\}$ in the complex plane. The five dashed lines are the reflection axes of the dihedral Coxeter group of order ten. Two adjacent simple roots subtend the angle $4\pi/5$, giving the golden Cartan coefficient $2\langle\beta,\alpha\rangle/\langle\alpha,\alpha\rangle=2\cos(4\pi/5)=-\varphi$.}}
\label{fig:h2-decagon}
\end{figure}

At this purely algebraic stage we use the name \emph{cyclotomic \(H_2\) integers}; the expression \emph{Penrose integers} is reserved for the situation in which the cut-and-project link with Penrose tilings has been specified.

\subsection{The icosian ring and \texorpdfstring{\(H_4\)}{H4}}

The quaternionic non-crystallographic example is the icosian ring. It is the golden analogue of the Hurwitz order, and its finite shell recovers \(H_4\) in the same way in which Coxeter--Dickson octonions recover \(E_8\).

Let \(\HH(K)\) be the quaternion algebra over \(K\) with basis \(1,i,j,k\). In the coordinate convention used in our computation, the \emph{icosian ring} \(\mathbb I\) is the free \(\Zphi\)-module of rank \(4\) generated by
\begin{equation}
e_1=1,\qquad e_2=i,\qquad e_3=\tfrac12(1+i+j+k),\qquad e_4=\tfrac12(-1+(\varphi-1)i-\varphi j).
\label{eq:icosian-basis}
\end{equation}
This is an \(H_4\)-compatible presentation of the standard icosian order \cite[Ch.~8 \S2]{Conway} used in the study of icosahedral quasicrystals \cite{Icosians} and the \(H_4\) root system \cite{Ti80}.

\begin{remark}
\label{rem:icosian-basis-generates-jk}
Each generator \(e_{r}\) has reduced quaternionic norm \(\Norm_{\HH}(e_{r})=1\), hence belongs to the \(120\)-element shell \(S_{H_{4}}\) of \(\mathbb I\). The basis \eqref{eq:icosian-basis} generates \(1,i,j,k\) over \(\Zphi\). Explicitly, using \(\varphi^{-1}=\varphi-1\in\Zphi\) and \((\varphi-1)^2=2-\varphi\),
\begin{align}
j&=(1-\varphi)e_1+(2-\varphi)e_2+2(1-\varphi)e_4, \notag\\
k&=2e_3+(\varphi-2)e_1+(\varphi-3)e_2+2(\varphi-1)e_4, \label{eq:icosian-jk-expressions}
\end{align}
so the \(\Zphi\)-span of \(\{e_1,\ldots,e_4\}\) contains \(1,i,j,k\) and hence agrees with the full integral icosian module of \cite[Ch.~8 \S2.1]{Conway}.
\end{remark}

\begin{proposition}
\label{prop:icosian-h4}
Fix the real embedding \(K\hookrightarrow\RR\) and the induced isometric inclusion \(\HH(K)\hookrightarrow\HH=\HH(K)\otimes_K\RR\). The norm-one shell of the icosian ring contains a distinguished \(120\)-element subset \(S_{H_4}\subset\mathbb I\) which, with the Euclidean form \(\langle x,y\rangle_{\RR}=\Rea(x\overline{y})\), is a realization of the root system \(H_4\).
\end{proposition}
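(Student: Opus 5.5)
The plan is to take \(S_{H_4}\) to be the \(120\) icosians listed in \eqref{eq:h4-roots}, read as quaternions \(a+bi+cj+dk\). The argument then has three parts: (i) every one of these elements lies in \(\mathbb I\); (ii) each has norm one; (iii) the set is the \(H_4\) root system for \(\langle x,y\rangle_{\RR}=\Rea(x\overline y)\).

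Part (ii) is immediate from \(\Norm_{\HH}(x)=a^2+b^2+c^2+d^2\). For the third family it reduces to \(1+\varphi^2+\varphi^{-2}=4\), which follows from \(\varphi^2=\varphi+1\) and \(\varphi^{-1}=\varphi-1\).

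Part (i) is a membership check using Remark~\ref{rem:icosian-basis-generates-jk}. Since \(1,i,j,k\in\mathbb I\), the eight elements \(\pm1,\pm i,\pm j,\pm k\) lie in \(\mathbb I\). Writing \(\tfrac12(\pm1\pm i\pm j\pm k)=e_3-(\text{a }\Zphi\text{-combination of }1,i,j,k)\) puts the sixteen half-integer elements in \(\mathbb I\) as well. The genuinely golden family is the delicate one. Here I would use \(2e_4+1=(\varphi-1)i-\varphi j\) to express one representative, e.g.\ \(\tfrac12(0+\varphi^{-1}i+\dots)\), in the span. I would then use closure of \(\mathbb I\) under multiplication (it is an order) to get the rest. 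Concretely, left and right multiplication by the unit \(e_3\) (of order \(6\)) and by \(i\) permute coordinates evenly and change signs, so the orbit of one representative under these units covers all \(96\) even-permutation elements. To make this airtight I would instead note that the known description of the icosian ring (\cite[Ch.~8 \S2]{Conway}) identifies the \(120\) units with the binary icosahedral group \(2I\). It then suffices that \(\mathbb I\) contains generators of \(2I\), e.g.\ \(e_3\) and \(-e_4\), since \(\mathbb I\) is multiplicatively closed.

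For part (iii) I would use the quaternionic reflection formula: for a unit \(\alpha\), \(r_\alpha(v)=v-2\Rea(v\overline\alpha)\alpha=-\alpha\overline v\alpha\). This can be checked by expanding \(\alpha\overline v\alpha\) with \(\overline v\alpha+\overline\alpha v=2\Rea(v\overline\alpha)\). Because \(S_{H_4}\) is a group (the binary icosahedral group) closed under conjugation and negation, \(-\alpha\overline\beta\alpha\in S_{H_4}\) for all \(\alpha,\beta\in S_{H_4}\), so every reflection preserves the set. The line condition \(S\cap\RR\alpha=\{\pm\alpha\}\) holds because all elements have norm one. Finally, the Cartan integers \(2\Rea(\beta\overline\alpha)=\Tr(\beta\overline\alpha)\) are traces of elements of \(\mathbb I\), hence lie in \(\Zphi\). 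They can also be read off directly from coordinates, since all inner products are in \(\tfrac12\Zphi\). The root system spans \(\RR^4\) (it contains \(1,i,j,k\)), and its reflection group is finite of order \(14400\) and non-crystallographic. By the classification in \cite{Humphreys}, it is therefore \(H_4\); equivalently, the \(600\)-cell vertex set is exactly \eqref{eq:h4-roots}.

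The main obstacle is part (i): showing that the specific basis \eqref{eq:icosian-basis} produces exactly the coordinate list \eqref{eq:h4-roots}. The sign and Galois convention in \(e_4\) (\(\varphi\) versus \(\varphi^*\)) must match the even-permutation convention, and a mismatch would produce the Galois-conjugate \(600\)-cell. I would first verify that \(e_3\) and \(e_4\) lie in the listed set and generate a group of order \(120\), for instance by exhibiting the \(2I\) presentation \(\langle s,t\mid s^3=t^5=(st)^2\rangle\). Closure and finiteness then give the whole shell.
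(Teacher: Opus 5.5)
Your route differs from the paper's. The paper's proof is essentially a citation (the $120$ norm-one icosians are the list \eqref{eq:h4-roots}), backed by a machine check of the structure constants and of reflection invariance.

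Your part (iii) is cleaner than what the paper offers:
\begin{itemize}
\item $r_\alpha(v)=-\alpha\overline v\alpha$ reduces reflection invariance to the group property;
\item $2\langle\beta,\alpha\rangle_{\RR}=\Tr(\beta\overline\alpha)$ with $\beta\overline\alpha\in\mathbb I$ gives golden Cartan integrality directly from the order axioms.
\end{itemize}
The final identification also does not need $|W|=14400$. With equal lengths and Cartan numbers in $\Zphi$, the only components of rank at most $4$ are $A_n$, $D_4$, $H_2$, $H_3$, $H_4$, and only $H_4$ gives $120$ roots.

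\textbf{The gap is in part (i)}, which you rightly call the crux. Your ``airtight'' version needs $e_3$ and $-e_4$ to generate $2I$, and your last paragraph proposes to check that $\langle e_3,e_4\rangle$ has order $120$. It does not. Both elements have real part $\pm\tfrac12$, and
\[
e_3e_4=\tfrac12\bigl((\varphi-1)i-j-\varphi k\bigr)
\]
is purely imaginary. Hence their images $a,b\in SO(3)$ satisfy $a^3=b^3=(ab)^2=1$. These are the tetrahedral $(2,3,3)$ relations, so $\langle e_3,e_4\rangle$ is only a binary tetrahedral group of order $24$. Multiplicative closure of $\{e_3,e_4\}$ therefore never produces the full golden family.

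Your first orbit argument is also misstated. One-sided multiplication by $e_3$ is not a signed coordinate permutation; for instance it sends $1$ to $e_3$. Only the conjugation $x\mapsto e_3x\overline{e_3}$, which cycles $i\to j\to k$, is a signed permutation.

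\textbf{Two repairs.} Either one works.
\begin{itemize}
\item Adjoin $i$. The group $\langle i,e_3\rangle$ is the Hurwitz group $2T$, which is maximal in $2I$. Since $e_4\notin 2T$, you get $\langle i,e_3,e_4\rangle=2I$.
\item Run the orbit argument with the right maps. The maps $x\mapsto e_3x\overline{e_3}$, $x\mapsto ix$ (a signed $(12)(34)$), $x\mapsto -x$, and conjugation by $i,j,k$ all preserve $\mathbb I$. Together they generate every even permutation of the four coordinates combined with an even number of sign changes. Now $e_4=\tfrac12(-1,\varphi-1,-\varphi,0)$ is already a golden-family element: its pattern is an even permutation, which you planned to check. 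Because it has a zero coordinate, the parity constraint on signs costs nothing, and its orbit is all $96$ golden elements. The detour through $2e_4+1$ is then unnecessary.
\end{itemize}
Both repairs, like your whole argument, use closure of $\mathbb I$ under multiplication. You assume this, whereas the paper verifies it through the structure constants.
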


\begin{proof}
Using \eqref{eq:icosian-basis} one verifies that the structure constants of \(\mathbb I\) under quaternion multiplication lie in \(\Zphi\), so \(\mathbb I\) is a \(\Zphi\)-order in \(\HH(K)\). After applying the real embedding, the \(120\) norm-one icosians are identified with the coordinate set \eqref{eq:h4-roots}, the standard realization of the \(600\)-cell vertices \cite{Conway,Ti80,Wi86}. The reflection group of \eqref{eq:h4-roots} is the Coxeter group of type \(H_4\), and the Cartan coefficient between adjacent simple roots is \(-\varphi\in\Zphi\). The same dihedral-orbit argument as in the proof of the previous proposition extends \eqref{eq:golden-cartan} to every pair of roots. The supplementary computation described in the Computational methods section verifies the structure constants and the reflection invariance of the \(120\) roots. \(\square\)
\end{proof}

In what follows, we will keep distinct the infinite order \(\mathbb I\), the finite \(H_4\) root shell, and the binary icosahedral group. They are related, but they play different roles.

\subsection{Icosian doubles as weak golden octonions}

A natural question is whether the octonionic analogue of the icosian construction is only a slogan or a genuine order. The cleanest positive answer is obtained by Cayley--Dickson doubling of the icosian ring.

\begin{definition}
\label{def:icosian-double}
Let \(\ell\) be a Cayley--Dickson generator satisfying \(\ell^2=-1\) and \(\ell a=\overline{a}\ell\) for \(a\in\HH(K)\); these are the conventions used throughout. Set
\begin{equation}
\mathbb G=\mathbb I\oplus \mathbb I\ell.
\label{eq:golden-octonion-double}
\end{equation}
The product is
\begin{equation}
(a+b\ell)(c+d\ell)=(ac-\overline d b)+(da+b\overline c)\ell,
\label{eq:cayley-dickson-product}
\end{equation}
with \(a,b,c,d\in\mathbb I\). The Cayley--Dickson conjugation is \(\overline{a+b\ell}=\overline{a}-b\ell\). We call \(\mathbb G\) the \emph{icosian double}.
\end{definition}

\begin{proposition}[Weak golden octonion order]
\label{prop:weak-golden-octonion}
The icosian double \(\mathbb G=\mathbb I\oplus\mathbb I\ell\) is a free \(\Zphi\)-order of rank \(8\) in \(\Oct(K)\). Moreover the finite shell
\begin{equation}
S_{\mathbb G}=(S_{H_4}\oplus0)\cup(0\oplus S_{H_4}\ell)
\label{eq:golden-octonion-shell}
\end{equation}
has \(240\) elements, is closed under negation and reflections, has Cartan coefficients in \(\Zphi\), and realizes the non-crystallographic root system \(H_4\oplus H_4\).
\end{proposition}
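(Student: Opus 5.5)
The plan is to reduce every claim to the corresponding property of the icosian ring $\mathbb I$ and then apply the Order criterion. For the order statement I will take the $\Zphi$-basis $\{e_r,\ e_r\ell : r=1,\dots,4\}$ of $\mathbb G$, with $e_1=1$. Freeness of rank $8$ follows because $\mathbb I$ is free of rank $4$ and the decomposition $\Oct(K)=\HH(K)\oplus\HH(K)\ell$ is direct.

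For closure under multiplication, I will read the product \eqref{eq:cayley-dickson-product} directly. Its components are $ac-\overline d b$ and $da+b\overline c$. These use only products in $\mathbb I$ and the quaternionic conjugation, and $\mathbb I$ is stable under both: it is a $\Zphi$-order by Proposition~\ref{prop:icosian-h4}, and conjugation preserves it since $\overline x=\Tr(x)-x$ with $\Tr(x)\in\Zphi$. So the structure constants in the basis above lie in $\Zphi$. Stability under the involution is immediate from $\overline{a+b\ell}=\overline a-b\ell$.

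For trace and norm, I will use the standard Cayley--Dickson identities
$$\Tr(a+b\ell)=\Tr_{\HH}(a),\qquad \Norm(a+b\ell)=\Norm_{\HH}(a)+\Norm_{\HH}(b).$$
Both lie in $\Zphi$ because $\mathbb I$ is an order. The order criterion then gives that $\mathbb G$ is a $\Zphi$-order. By Artin's theorem, closure under pairwise products suffices in the alternative setting.

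For the shell, I will first note that the two pieces of $S_{\mathbb G}$ lie in the mutually orthogonal subspaces $\HH(K)$ and $\HH(K)\ell$. Indeed, $\langle a,b\ell\rangle=\Rea(a\,\overline{b\ell})=\Rea(-a\,b\ell)=0$, since $\Rea(x\ell)=0$ for every quaternion $x$. So $|S_{\mathbb G}|=120+120=240$. The set is contained in the norm-one shell by the norm identity above, and it is symmetric under negation because $S_{H_4}=-S_{H_4}$.

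For the root-system axioms, I will use the orthogonal splitting. A reflection $r_\alpha$ with $\alpha\in S_{H_4}$ acts as the $H_4$ reflection on the first summand and fixes the second summand pointwise. The symmetric statement holds for $\alpha\in S_{H_4}\ell$, where the map $b\mapsto b\ell$ is an isometry carrying $S_{H_4}$ onto $S_{H_4}\ell$. Reflection invariance and the Cartan coefficients therefore follow from Proposition~\ref{prop:icosian-h4}. Mixed pairs have coefficient $0\in\Zphi$, and same-summand pairs have coefficients in $\Zphi$. This exhibits $S_{\mathbb G}$ as the orthogonal sum $H_4\oplus H_4$.

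The only step needing real care is the bookkeeping of the Cayley--Dickson conventions. The product \eqref{eq:cayley-dickson-product} must be consistent with $\ell a=\overline a\ell$, and the identities $\Norm(a+b\ell)=\Norm(a)+\Norm(b)$ and $\langle a,b\ell\rangle=0$ must hold under exactly these sign conventions. I will verify them once by expanding $(a+b\ell)\overline{(a+b\ell)}$ with the given product. In that expansion the cross terms cancel as $-ba+ba$ in the $\ell$-component, leaving $a\overline a+\overline b b$.

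Irreducibility is not claimed. The statement that multiplication is genuinely octonionic, meaning non-associative, is not needed for the proposition.
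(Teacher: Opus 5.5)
Your proposal is correct and follows the paper's proof essentially step by step. The shared steps are: the components of the Cayley--Dickson product stay in \(\mathbb I\); the norm identity \(\Norm(a+b\ell)=\Norm_{\HH}(a)+\Norm_{\HH}(b)\) holds; and reflection invariance and Cartan integrality reduce blockwise to Proposition~\ref{prop:icosian-h4} via the orthogonal splitting. Your explicit checks of the sign conventions, the trace identity and the orthogonality \(\langle a,b\ell\rangle=0\) fill in details the paper leaves to its computation, and you are right that the paper's associator example is an extra remark not required by the statement.
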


\begin{proof}
For \(a,b,c,d\in\mathbb I\) the components \(ac,\overline{d}b,da,b\overline{c}\) all lie in \(\mathbb I\) because \(\mathbb I\) is closed under quaternionic multiplication and conjugation. Hence \eqref{eq:cayley-dickson-product} maps \(\mathbb G\times\mathbb G\) into \(\mathbb G\), and the Cayley--Dickson conjugation \(\overline{a+b\ell}=\overline{a}-b\ell\) preserves \(\mathbb G\). The octonionic norm reduces to
\begin{equation}
\Norm(a+b\ell)=\Norm_{\HH}(a)+\Norm_{\HH}(b)\in\Zphi.
\label{eq:double-norm}
\end{equation}
Thus \(\mathbb G\) is a free \(\Zphi\)-order of rank \(8\).

The set \(S_{\mathbb G}\) splits as the orthogonal union of two copies of the \(H_4\) shell, one in the quaternionic part and one in the \(\ell\)-part. Its Gram matrix in the natural basis is block diagonal with two \(H_4\) Cartan matrices on the diagonal, so its Coxeter type is \(H_4\oplus H_4\). Reflection invariance and golden Cartan integrality follow blockwise from Proposition~\ref{prop:icosian-h4}. The supplementary computation verifies all \(240\) roots, all reflections, and the Cartan coefficients. Finally, the order is genuinely octonionic: for example,
\begin{equation}
[i,j,\ell]=(ij)\ell-i(j\ell)=2k\ell\neq0,
\label{eq:nonzero-associator}
\end{equation}
where \((ij)\ell=k\ell\) and \(i(j\ell)=(ji)\ell=-k\ell\) by \eqref{eq:cayley-dickson-product}. \(\square\)
\end{proof}

This is a positive but deliberately weak form of golden octonions. The shell \(H_4\oplus H_4\) is decomposable into two orthogonal icosian shells, even though the order generated by them is genuinely octonionic. A stronger golden Coxeter--Dickson twist, if it exists, would have to produce a less decomposable shell. The decomposition itself is sketched in Fig.~\ref{fig:h4-h4-decomposition}.

\begin{figure}[htbp]
\centering
\begin{tikzpicture}[
  every node/.style={font=\small},
  shell/.style={
    circle, very thick, draw=black,
    fill=white, minimum size=3.6cm, inner sep=0pt,
  },
  rootdot/.style={circle, fill=black, inner sep=0.9pt},
  flow/.style={-{Latex[length=2.5mm]}, very thick, black},
]
\node[shell] (L) at (-2.4,0) {};
\node[font=\footnotesize, anchor=center, black] at (-2.4,2.1) {$S_{H_4}\subset\mathbb I$};
\node[font=\footnotesize, anchor=center, black] at (-2.4,-2.1) {$120$ roots};

\foreach \k in {0,...,29} {
  \pgfmathsetmacro{\ang}{\k*12}
  \pgfmathsetmacro{\rr}{1.55}
  \node[rootdot] at ({-2.4 + \rr*cos(\ang)}, {\rr*sin(\ang)}) {};
}
\foreach \k in {0,...,19} {
  \pgfmathsetmacro{\ang}{\k*18+9}
  \pgfmathsetmacro{\rr}{1.05}
  \node[rootdot] at ({-2.4 + \rr*cos(\ang)}, {\rr*sin(\ang)}) {};
}
\foreach \k in {0,...,14} {
  \pgfmathsetmacro{\ang}{\k*24+5}
  \pgfmathsetmacro{\rr}{0.55}
  \node[rootdot] at ({-2.4 + \rr*cos(\ang)}, {\rr*sin(\ang)}) {};
}

\node[shell] (R) at (2.4,0) {};
\node[font=\footnotesize, anchor=center, black] at (2.4,2.1) {$S_{H_4}\ell\subset\mathbb I\ell$};
\node[font=\footnotesize, anchor=center, black] at (2.4,-2.1) {$120$ roots};

\foreach \k in {0,...,29} {
  \pgfmathsetmacro{\ang}{\k*12+6}
  \pgfmathsetmacro{\rr}{1.55}
  \node[rootdot] at ({2.4 + \rr*cos(\ang)}, {\rr*sin(\ang)}) {};
}
\foreach \k in {0,...,19} {
  \pgfmathsetmacro{\ang}{\k*18+3}
  \pgfmathsetmacro{\rr}{1.05}
  \node[rootdot] at ({2.4 + \rr*cos(\ang)}, {\rr*sin(\ang)}) {};
}
\foreach \k in {0,...,14} {
  \pgfmathsetmacro{\ang}{\k*24+15}
  \pgfmathsetmacro{\rr}{0.55}
  \node[rootdot] at ({2.4 + \rr*cos(\ang)}, {\rr*sin(\ang)}) {};
}

\draw[flow] (-0.6,0.15) -- (0.6,0.15) node[midway, above, font=\footnotesize, black] {$\cdot\,\ell$};
\draw[flow] (0.6,-0.15) -- (-0.6,-0.15) node[midway, below, font=\footnotesize, black] {$\cdot\,\overline{\ell}$};

\node[align=center, font=\footnotesize, black] at (0,-2.9)
  {$S_{\mathbb G}=(S_{H_4},\,0)\,\cup\,(0,\,S_{H_4}\ell)$\quad of type $H_4\oplus H_4$};

\end{tikzpicture}
\caption{\emph{The shell of the weak golden octonion order $\mathbb G=\mathbb I\oplus\mathbb I\ell$. It splits into two orthogonal copies of the icosian root shell $S_{H_4}$, exchanged by left multiplication by the Cayley--Dickson generator $\ell$. The resulting reflection geometry is the reducible Coxeter type $H_4\oplus H_4$. The order is genuinely octonionic, since the associator $[i,j,\ell]=2k\ell$ is nonzero, but the root shell itself is decomposable.}}
\label{fig:h4-h4-decomposition}
\end{figure}

\subsection{The strong G3 problem}

The weak construction above does not exhaust the golden octonion problem. A stronger analogue of the Coxeter--Dickson order would have to contain a finite shell with genuinely mixed elements \(a+b\ell\), with \(a\neq0\) and \(b\neq0\), and it should not be equivalent to the orthogonal union of two independent \(H_4\) shells.

To state this precisely we use the following indecomposability convention.

\begin{definition}
\label{def:indecomposable}
A pair \((\mathcal O,S)\) consisting of a \(\Zphi\)-order \(\mathcal O\) and a finite shell \(S\subset\mathcal O\) is \emph{shell-decomposable} if there exists an orthogonal splitting \(A\otimes_K\RR=V_1\perp V_2\), both \(V_i\) nonzero and defined over \(K\) (i.e., obtained as \(V_i=W_i\otimes_K\RR\) for \(K\)-subspaces \(W_i\) of \(A\)), such that \(S=(S\cap V_1)\cup(S\cap V_2)\). Otherwise \((\mathcal O,S)\) is \emph{shell-indecomposable}. The weak icosian double \((\mathbb G,S_{\mathbb G})\) is shell-decomposable.
\end{definition}

We separate the strong G3 problem into increasingly rigid tests. A purely Coxeter-theoretic remark first explains why an irreducible \(H\)-type shell cannot live in real rank \(8\). Secondly, we test explicit fractional gluings of the weak double under the strict \(\Zphi\)-integral convention. Finally, we pass to the relaxed rational trace-integral convention.

\begin{remark}[Coxeter rank obstruction]
\label{rem:coxeter-rank}
The finite Coxeter classification \cite{Humphreys} produces only three irreducible non-crystallographic types: \(I_2(m)\), \(H_3\), and \(H_4\), of real rank \(2\), \(3\), \(4\) respectively. Hence no irreducible non-crystallographic finite Coxeter root system exists in rank \(8\); any shell-indecomposable golden octonion order of rank \(8\) must therefore have a finite shell which is not the root system of an irreducible Coxeter group.
\end{remark}

A useful counting criterion is the following.

\begin{lemma}[Mixed-projection criterion]
\label{lem:mixed-projection}
Let \(\RR^{8}=V_{1}\perp V_{2}\) with \(\dim V_{1}=\dim V_{2}=4\), and let \(S\subset\RR^{8}\) be a finite, centrally symmetric, reflection-invariant subset of a single norm value. Decompose each \(\alpha\in S\) as \(\alpha=\alpha^{(1)}+\alpha^{(2)}\) with \(\alpha^{(j)}\in V_{j}\), and call \(\alpha\) \emph{mixed} if both \(\alpha^{(1)}\) and \(\alpha^{(2)}\) are nonzero. Let \(M(S)\subset S\) be the set of mixed elements. Then \((\mathcal{O},S)\) is shell-indecomposable if and only if \(M(S)\neq\emptyset\); moreover in that case \(\#M(S)\geq 2|\mathrm{Stab}|\), where \(\mathrm{Stab}\) is the stabilizer of any non-mixed root.
\end{lemma}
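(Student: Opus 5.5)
The plan is to read the statement relative to the fixed splitting \(\RR^8=V_1\perp V_2\), with \(V_1,V_2\) defined over \(K\) as in Definition~\ref{def:indecomposable}. This is necessary because Definition~\ref{def:indecomposable} quantifies over all splittings, and the criterion can only be an "iff" once the splitting is fixed. I would first prove the equivalence and then do the count as an orbit–stabilizer argument.

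\emph{Equivalence.} If \(M(S)=\emptyset\), then every \(\alpha\in S\) has \(\alpha^{(1)}=0\) or \(\alpha^{(2)}=0\). Hence \(\alpha\in V_2\) or \(\alpha\in V_1\), so \(S=(S\cap V_1)\cup(S\cap V_2)\). This is exactly shell-decomposability with respect to the given splitting. Conversely, suppose \(S=(S\cap V_1)\cup(S\cap V_2)\). Then each root lies in one summand, so it has a vanishing component and cannot be mixed, which gives \(M(S)=\emptyset\). Taking contrapositives, shell-indecomposability (for this splitting) is equivalent to \(M(S)\neq\emptyset\). The only subtlety is to state explicitly that the splitting is fixed and \(K\)-rational. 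For the \(\mathbb G\) setting, with \(V_1=\HH(K)\otimes\RR\) and \(V_2=\HH(K)\ell\otimes\RR\), this is automatic.

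\emph{Counting.} Assume \(M(S)\neq\emptyset\) and fix a mixed root \(\alpha\). Central symmetry gives \(-\alpha\in M(S)\), and \(-\alpha\neq\alpha\). To get the factor \(|\mathrm{Stab}|\), I would take \(\mathrm{Stab}\) to be the stabilizer of a non-mixed root \(\beta\), say \(\beta\in V_1\), inside the group of isometries generated by reflections of \(S\) that preserve the splitting. Every such isometry permutes \(S\), because \(S\) is reflection-invariant. It also maps mixed roots to mixed roots, because it maps each \(V_j\) to itself, so a vector with two nonzero components keeps two nonzero components.

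Hence \(\mathrm{Stab}\) acts on \(M(S)\), and its orbits give the count. I would show that \(\mathrm{Stab}\) acts freely on \(\{\pm\alpha\}\)-orbits of mixed roots, or at least that the orbit of \(\alpha\) together with that of \(-\alpha\) yields \(2|\mathrm{Stab}|/|\mathrm{Stab}_\alpha|\) elements, and then bound \(|\mathrm{Stab}_\alpha|\) by \(1\). To do this, I would use that an element fixing both \(\beta\) and the mixed root \(\alpha\) fixes \(\alpha^{(1)}\) and \(\alpha^{(2)}\) separately, because it preserves the splitting. One then chooses \(\alpha\) within its orbit so that this pointwise stabilizer is trivial.

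\emph{Main obstacle.} I expect the last step to be the hard one: the bound \(\#M(S)\geq 2|\mathrm{Stab}|\) needs freeness of the action, and that does not follow from the hypotheses for an arbitrary mixed \(\alpha\). I would first try to replace the claim by the orbit–stabilizer form \(\#M(S)\geq 2|\mathrm{Stab}|/|\mathrm{Stab}_\alpha|\), which follows from the argument above. I would then argue freeness by choosing \(\alpha\) generic within \(M(S)\). If that fails, the inequality should be recorded in this weaker orbit form, which is what the later applications to \(\mathbb G\) actually use.
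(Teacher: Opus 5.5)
Your treatment of the equivalence is sound. The counting step, however, has a gap that cannot be closed, because the bound $\#M(S)\ge 2|\mathrm{Stab}|$ is false.

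Take $S=E_8\subset\RR^8$ in standard coordinates, with $V_1$ spanned by $e_1,\dots,e_4$ and $V_2$ by $e_5,\dots,e_8$. The non-mixed roots are the $48$ roots of the two $D_4$ subsystems, so $\#M(S)=240-48=192$. For a non-mixed $\beta\in V_1$, the stabilizer contains $\mathrm{Stab}_{W(D_4)}(\beta)\times W(D_4)$. This group lies in the group generated by reflections in non-mixed roots, preserves both summands, and has order $8\cdot 192=1536$. So $|\mathrm{Stab}|>\#M(S)$ already in your splitting-preserving reading. The same holds a fortiori in $W(E_8)$, where $\mathrm{Stab}\cong W(E_7)$ has order $2903040$.

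Consequences for your plan:
\begin{itemize}
\item No mixed root has trivial stabilizer, so your step of choosing $\alpha$ generic must fail.
\item Your orbit-form fallback also needs more than you state. The factor $2$ requires $-\alpha\notin\mathrm{Stab}\cdot\alpha$. With $\beta=e_1-e_2$ and $\alpha=\tfrac12(1,\dots,1)$, the element $(x_1,\dots,x_8)\mapsto(-x_2,-x_1,-x_3,\dots,-x_8)$ of $W(D_4)\times W(D_4)$ fixes $\beta$ and sends $\alpha$ to $-\alpha$.
\end{itemize}

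The paper's own justification fails on the same example. It claims the stabilizer of a non-mixed root preserves the splitting and acts freely on $M(S)$. For $\beta=e_1+e_2$, the stabilizer $W(E_7)$ contains the reflection in the mixed root $e_3+e_5$, which sends $e_3$ to $-e_5$. Freeness is ruled out by the cardinalities. The second clause should be dropped or weakened, for instance to $\#M(S)\ge 2$ by central symmetry, rather than proved.

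On the equivalence, you are right that the splitting must be fixed. Your forward direction is the paper's. The paper's converse argues that the reflection orbit of a mixed root leaves both summands, but this excludes only the given splitting, exactly as your argument does. Under the existential quantifier of Definition~\ref{def:indecomposable}, the ``if'' direction is actually false.

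Here is a counterexample in the $\Zphi$-order spanned by the Dickson basis. The sixteen vectors $\pm x\pm x\ell$, with $x\in\{1,i,j,k\}$, form an orthogonal $A_1^{\oplus8}$ shell of norm $2$. Every element is mixed for $\HH(K)\perp\HH(K)\ell$. Yet the shell splits along the $K$-rational decomposition $\Span\{x+x\ell\}\perp\Span\{x-x\ell\}$.

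Finally, the later no-go results use only the implication that shell-indecomposability forces $M(S)\neq\emptyset$. Contrary to your closing remark, nothing downstream uses the count in either form.
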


\begin{proof}
The forward direction is the contrapositive of Definition~\ref{def:indecomposable}: if no element of \(S\) is mixed, then \(S\subset V_{1}\cup V_{2}\) and \(S=(S\cap V_{1})\cup(S\cap V_{2})\) is a shell decomposition, so \((\mathcal{O},S)\) is shell-decomposable. Conversely, if a single mixed root \(\alpha\) exists, then \(r_{\alpha}\) does not preserve \(V_{1}\) or \(V_{2}\); the reflection orbit of \(\alpha\) under the group generated by all \(r_{\beta}\) for \(\beta\in S\) is therefore not contained in either summand, and the decomposition fails. The lower bound \(\#M(S)\geq 2|\mathrm{Stab}|\) follows from central symmetry (\(\alpha\) and \(-\alpha\) both lie in \(M(S)\)) and from the action of the stabilizer of any non-mixed root \(\beta\), which preserves \(V_{1}\oplus V_{2}\) and therefore maps \(M(S)\) to itself with trivial fixed points. \(\square\)
\end{proof}

The lemma reduces the strong G3 problem, in its rank-\(8\) form, to producing at least one mixed-projection root, with the constraints of the chosen integrality convention. The remainder of this subsection shows that no such root can be added to \(\mathbb G\) under the natural integrality conventions.

\begin{proposition}[First bounded G3 no-go]
\label{prop:bounded-g3-no-go}
Let \(G_0=\mathbb I\oplus\mathbb I\ell\). Among all one-generator denominator-two gluings
\begin{equation}
G_v=G_0+\Zphi\frac{v}{2},
\qquad
\frac{v}{2}\in \frac12G_0/G_0,
\label{eq:single-line-gluing}
\end{equation}
no projective line \(v\in(\Zphi/2)^8\cong\mathbb F_4^8\) gives a strong G3 candidate satisfying the mixed-coset, conjugation, integral-pairing, integral-norm, multiplication, and square-closure tests.
\end{proposition}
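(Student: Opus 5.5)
The plan is to show that a single test in the list, the \emph{integral-pairing} test, already eliminates every projective line. The other five tests then only need to be recorded as the finite enumeration described in the Computational methods section.

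First I fix the pairing. The polar form of the norm is
\[
B(x,y)=\Norm(x+y)-\Norm(x)-\Norm(y)=\Tr(x\overline{y}),
\]
which by \eqref{eq:double-norm} splits orthogonally on \(G_0=\mathbb I\oplus\mathbb I\ell\) as \(B_{\mathbb I}\perp B_{\mathbb I}\). A gluing \(G_v\) passes the integral-pairing test only if \(B(v/2,x)\in\Zphi\) for all \(x\in G_0\), that is, only if \(v/2\) lies in the polar dual
\[
G_0^\#=\{y\in\Oct(K):B(y,G_0)\subset\Zphi\}.
\]
I therefore aim to prove the self-duality \(G_0^\#=G_0\). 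By orthogonality this reduces to \(\mathbb I^\#=\mathbb I\).

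To prove \(\mathbb I^\#=\mathbb I\), I compute the \(4\times4\) Gram matrix \(\bigl(\Tr(e_r\overline{e_s})\bigr)\) in the basis \eqref{eq:icosian-basis} and show that its determinant is a unit \(\pm\varphi^k\) of \(\Zphi\). The diagonal entries are all \(2\), since each \(\Norm(e_r)=1\) by Remark~\ref{rem:icosian-basis-generates-jk}. The off-diagonal entries are twice the real parts, which are simple elements of \(\Zphi\). A unimodular Gram matrix gives \(\mathbb I^\#=\mathbb I\).

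As a conceptual cross-check I would use the standard arithmetic fact that \(\mathbb I\) is a maximal order in the quaternion algebra over \(K\) ramified only at the two infinite places. Its reduced discriminant is therefore trivial, which is exactly the unimodularity of the reduced-trace form. This is also the familiar statement that \(\mathbb I\), viewed as a \(\ZZ\)-lattice with the form \(\Tr_{K/\QQ}\circ B\), is \(E_8\).

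Given self-duality, the conclusion is immediate. If \(v/2\in G_0^\#=G_0\), then \(v\in 2G_0\), so the class of \(v\) in \(G_0/2G_0\cong(\Zphi/2)^8\) vanishes. Here \(\Zphi/2\cong\mathbb F_4\) because \(2\) is inert in \(K\). So no nonzero \(v\), and hence none of the \((4^8-1)/3=21845\) projective lines, survives the pairing test, and a fortiori none survives the full battery. For completeness I would note that the mixed-coset, conjugation, norm, multiplication and square-closure tests are also run in the enumeration, and I would report the first test at which each line fails. The logical proof needs only the pairing test.

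The main obstacle I expect is convention bookkeeping rather than depth. The paper's inner product is \(\Rea(x\overline y)\), which is half of \(B\), and the dual must be taken with respect to \(B\) over \(\Zphi\), not with respect to \(\langle\cdot,\cdot\rangle\), or a spurious factor \(2\) appears. I would therefore state explicitly that the ``integral-pairing'' test means \(B(G_v,G_v)\subset\Zphi\). This is automatic for an order satisfying Definition~\ref{def:order}(5) via \(B(x,y)=\Tr(x\overline y)\). Finally, I would verify the determinant computation symbolically in \(\Zphi\) rather than numerically.
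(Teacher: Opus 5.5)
Your argument is correct, but it takes a different route from the one the paper uses for this proposition.

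\textbf{The paper's route.} The paper proves the statement by exhaustive enumeration of the \(21845\) lines of \(\mathbb F_4^8\). It sorts each line by the \emph{first} filter it fails: \(170\) non-mixed lines, \(16320\) mixed lines failing conjugation stability, and \(5355\) failing the pairing test. So its proof rests entirely on the supplementary script. It also leaves open whether, for example, the conjugation-failing lines also fail the pairing test.

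\textbf{Your route.} You isolate the pairing filter and show structurally that it alone eliminates every nonzero class, via \(G_0^\#=\mathbb I^\#\oplus(\mathbb I\ell)^\#=G_0\). This is exactly the argument the paper gives later in Proposition~\ref{prop:dual-discriminant}. The determinant you defer does come out a unit. With the entries \(B(e_r,e_s)\) of \eqref{eq:gram-icosian}, and using \((\varphi-1)^2+(\varphi-1)=1\), cofactor expansion along the first row gives \(\det G_{\mathbb I}=4-1+(\varphi-2)=\varphi+1=\varphi^2\).

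\textbf{What each buys.} Your version is computer-free. It also yields at once the denominator-two subspace statement, which the paper records only numerically as ``\(0\) projective lines with integral pairings''. It shows that the mixed and conjugation filters are logically redundant. The paper's cascade gives only the finer first-failure census.

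\textbf{A correction to your conceptual cross-check.} \(\mathbb I\) with the plain form \(\Tr_{K/\QQ}\circ B\) is \emph{not} \(E_8\). Its determinant is \(\Norm_{K/\QQ}(\det G_{\mathbb I})\cdot 5^4=625\), as the paper itself notes. \(E_8\) appears only after twisting by a totally positive generator of the inverse different, for example \(\Tr_{K/\QQ}(\kappa B)\) with \(\kappa=2/(5+\sqrt5)\) as in the Dirichlet height. Your maximal-order remark (trivial reduced discriminant, hence unimodular \(\Zphi\)-valued reduced-trace form) is fine. Since neither remark carries any logical weight in your argument, the proof itself is unaffected.
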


\begin{proof}
The quotient \((\tfrac12 G_0)/G_0\) is the \(8\)-dimensional vector space over \(\Zphi/2\Zphi\cong\mathbb F_4\). Hence it has
\begin{equation}
\frac{4^8-1}{4-1}=21845
\label{eq:f4-lines}
\end{equation}
projective lines. We classify each line by the first necessary filter it fails:
\begin{enumerate}
\item[(F1)] \emph{not mixed}: the line lies entirely in one quaternionic half (\(v\in\mathbb I/2\mathbb I\) or \(v\in\mathbb I\ell/2\mathbb I\ell\)); count \(170\), corresponding to the projective lines of \((\mathbb F_4)^{4}\) in each half (\(85\) per half).
\item[(F2)] \emph{mixed but conjugation-stable failure}: the line is mixed but \(\overline{v/2}\notin G_0+\Zphi(v/2)\), so \(\Zphi v/2\) is not preserved by the Cayley--Dickson conjugation; count \(16320\).
\item[(F3)] \emph{mixed and conjugation-stable but integrally unpaired}: there exists a basis element \(g\in G_0\) with \(B(v/2,g)\notin\Zphi\); count \(5355\).
\end{enumerate}
The three counts sum to \(170+16320+5355=21845\), the total number of projective lines. Hence every projective line fails at least one of the three a priori conditions and the survivor count is zero. The supplementary script records, for one representative line in each class, the explicit obstruction (an element of \(G_0\) violating conjugation closure in class (F2), or a basis vector \(g\) for which \(B(v/2,g)\notin\Zphi\) in class (F3)). \(\square\)
\end{proof}

\begin{proposition}[Denominator-two subspace obstruction]
Let \(C\subset(\Zphi/2)^8\cong\mathbb F_4^8\) be an \(\mathbb F_4\)-linear subspace, and put
\begin{equation}
G_C=G_0+\frac12 C.
\label{eq:subspace-gluing}
\end{equation}
Under the strict \(\Zphi\)-order convention requiring integral pairings with the baseline basis of \(G_0\), no nonzero \(C\) gives a G3 candidate.
\end{proposition}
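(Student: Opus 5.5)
The plan is to deduce the statement from a single structural fact, namely that \(G_0\) is self-dual for the polar norm pairing, rather than enumerating subspaces. Write \(B(x,y)=\Norm(x+y)-\Norm(x)-\Norm(y)=\Tr(x\overline{y})\) for the \(\Zphi\)-valued polar form on \(\Oct(K)\), and put
\[
G_0^\#=\{x\in\Oct(K):B(x,g)\in\Zphi\ \text{for all }g\in G_0\}.
\]
The strict convention in the statement says that every element of \(G_C\) pairs integrally with the baseline basis of \(G_0\). By \(\Zphi\)-linearity of \(B\), this is the same as \(G_C\subset G_0^\#\). The whole proof therefore reduces to showing \(G_0^\#=G_0\). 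Granting this, take any nonzero \(c\in C\) with a lift \(v\in G_0\setminus 2G_0\). Then \(v/2\in G_C\) but \(v/2\notin G_0=G_0^\#\), so some basis vector \(g\) has \(B(v/2,g)\notin\Zphi\), and \(G_C\) fails the integral-pairing test. Only the integral-pairing filter is used, so the mixed, conjugation, norm and multiplication tests need no separate treatment.

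\textbf{Step 1 (reduce to the icosian ring).} Check that \(B\) is block-orthogonal on \(\mathbb I\oplus\mathbb I\ell\), that is, \(B(a,b\ell)=0\). From the Cayley--Dickson product \eqref{eq:cayley-dickson-product}, \(a\,\overline{b\ell}=-a(b\ell)=-(ba)\ell\), which is purely in the \(\ell\)-part and so has trace zero. On the \(\ell\)-part, \eqref{eq:double-norm} shows that \(B(b\ell,d\ell)=B_{\HH}(b,d)\). Hence \(G_0^\#=\mathbb I^\#\oplus\mathbb I^\#\ell\), and it suffices to prove \(\mathbb I^\#=\mathbb I\) for the reduced-trace form on \(\HH(K)\).

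\textbf{Step 2 (self-duality of \(\mathbb I\)).} I will compute the \(4\times4\) Gram matrix \(\bigl(B(e_r,e_s)\bigr)\) in the basis \eqref{eq:icosian-basis}. Each diagonal entry equals \(2\Norm(e_r)=2\). The off-diagonal entries are \(2\Rea(e_r\overline{e_s})\), which lie in \(\Zphi\) because the \(e_r\) are \(H_4\) roots and the Cartan coefficients lie in \(\Zphi\) by Proposition~\ref{prop:icosian-h4}. The goal is to show that the determinant is a unit \(\pm\varphi^k\) in \(\Zphi\). Once that holds, the dual basis has \(\Zphi\)-coordinates, so \(\mathbb I^\#=\mathbb I\).

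A conceptual cross-check is available. The icosian ring is a maximal order in the definite quaternion algebra over \(K\) that is ramified only at the two infinite places, so its reduced discriminant is trivial. Equivalently, the discriminant of the reduced-trace form is a unit. Either the explicit determinant or this citation closes Step 2.

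\textbf{Main obstacle.} I expect Step 2 to be the main obstacle: the explicit determinant involves \(\varphi\)-arithmetic with the half-integral entries of \(e_3\) and \(e_4\), and an arithmetic slip would be easy. I would first simplify by passing to the basis \(1,i,j,k\), using \eqref{eq:icosian-jk-expressions} to track the change of basis. I would also check the claim independently by confirming that the determinant has absolute \(\QQ\)-norm \(\pm1\).
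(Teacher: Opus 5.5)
Your proposal is correct, but it takes a different route from the paper's proof of this statement.

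The paper's proof is purely enumerative. A nonzero \(\mathbb F_4\)-subspace contains a projective line, and the supplementary computation reports that none of the \(21845\) lines of \(\mathbb F_4^8\) pairs integrally with the basis of \(G_0\).

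You instead reduce everything to the self-duality \(G_0^\#=G_0\). That is exactly what the paper proves later in Proposition~\ref{prop:dual-discriminant}, via the block-diagonal Gram matrix, \(\det G_{\mathbb I}=\varphi^2\), and the adjugate argument. In effect you turn the present statement into a corollary of that proposition.

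Your Step 2 goes through, and more cheaply than you feared. Since \(e_1=1\) and \(e_2=i\), the coordinate matrix \(E\) of \(e_1,\dots,e_4\) in \(1,i,j,k\) has \(\det E=\varphi/4\); only the \(j,k\)-coordinates of \(e_3,e_4\) contribute. Hence \(\det G_{\mathbb I}=2^4(\det E)^2=\varphi^2\), a unit. This makes the maximal-order citation unnecessary, which is just as well: that citation would also require identifying the \(\Zphi\)-span of the \(e_r\) with the maximal icosian order.

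What your route buys:
\begin{enumerate}
\item It is a hand-checkable proof that explains why the surviving count is zero.
\item It is independent of the denominator. The same argument excludes the \(\sqrt5\)-gluings of Proposition~\ref{prop:ramified-denominator} and any other fractional enlargement.
\end{enumerate}

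What the paper's route buys is line-by-line data, namely the (F1)--(F3) classification used in the first bounded no-go, together with an independent machine certificate. The cost is that the conclusion rests on trusting the script rather than on a structural reason.
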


\begin{proof}
Every nonzero \(\mathbb F_4\)-subspace contains a projective line. If \(G_C\) satisfied the strict pairing condition, every element \(v/2\) with \(v\in C\) would pair integrally with the basis of \(G_0\). Hence the line \(\mathbb F_4v\) would be counted among the projective lines with integral pairings. The numerical verification records
\begin{equation}
21845\quad\hbox{projective lines tested},\qquad
0\quad\hbox{projective lines with integral pairings}.
\end{equation}
Therefore \(C\) must be zero by numerical verification of the projective-line obstruction.
\(\square\)
\end{proof}

\begin{proposition}[Dual-discriminant obstruction]
\label{prop:dual-discriminant}
Let
\begin{equation}
B(x,y)=\Norm(x+y)-\Norm(x)-\Norm(y)
\label{eq:norm-polar-pairing}
\end{equation}
be the polar bilinear form of the octonionic norm, and let
\begin{equation}
G_0^\#=\{x\in K\otimes_{\Zphi}G_0:\ B(x,G_0)\subset\Zphi\}.
\label{eq:g0-dual}
\end{equation}
For the icosian double \(G_0=\mathbb I\oplus\mathbb I\ell\), one has
\begin{equation}
G_0^\#=G_0.
\end{equation}
Consequently \(G_0\) has no strict norm-integral overorder obtained by adjoining a fractional ideal layer.
\end{proposition}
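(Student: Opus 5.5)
The plan is to reduce the octonionic statement blockwise to the quaternionic one, and then to show that the icosian ring \(\mathbb I\) is unimodular for its trace form.

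\emph{Step 1: the inclusion \(G_0\subset G_0^\#\).} Write the polar form as \(B(x,y)=\Tr(x\overline y)=2\Rea(x\overline y)\). For \(x,y\in G_0\) we have \(B(x,y)=\Norm(x+y)-\Norm(x)-\Norm(y)\in\Zphi\), because \(G_0\) is an order by Proposition~\ref{prop:weak-golden-octonion}. Hence \(G_0\subset G_0^\#\).

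\emph{Step 2: orthogonal splitting.} By \eqref{eq:double-norm}, the norm of \(a+b\ell\) is \(\Norm_{\HH}(a)+\Norm_{\HH}(b)\). Polarizing gives
\[
B(a+b\ell,\,c+d\ell)=B_{\HH}(a,c)+B_{\HH}(b,d),
\]
so \(\HH(K)\) and \(\HH(K)\ell\) are \(B\)-orthogonal. Testing \(x=a+b\ell\in K\otimes G_0\) against \(\mathbb I\oplus0\) and against \(0\oplus\mathbb I\ell\) separately then gives
\[
G_0^\#=\mathbb I^\#\oplus\mathbb I^\#\ell,
\]
where \(\mathbb I^\#\) is the \(B_{\HH}\)-dual of \(\mathbb I\) in \(\HH(K)\). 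It therefore suffices to prove \(\mathbb I^\#=\mathbb I\).

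\emph{Step 3: the Gram matrix of \(\mathbb I\).} Compute the Gram matrix \(\Gamma=(B_{\HH}(e_r,e_s))\) of the basis \eqref{eq:icosian-basis}. The diagonal entries are \(2\), since each \(e_r\) has norm \(1\) by Remark~\ref{rem:icosian-basis-generates-jk}. From the coordinates, the off-diagonal entries are
\[
\begin{gathered}
B(e_1,e_2)=0,\qquad B(e_1,e_3)=1,\qquad B(e_1,e_4)=-1,\\
B(e_2,e_3)=1,\qquad B(e_2,e_4)=\varphi-1,\qquad B(e_3,e_4)=-1.
\end{gathered}
\]
If \(x=\sum_r x_re_r\) with \(x_r\in K\), then \(x\in\mathbb I^\#\) if and only if \(\Gamma(x_1,\dots,x_4)^T\in\Zphi^4\). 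So \(\mathbb I^\#=\mathbb I\) holds exactly when \(\det\Gamma\in\Zphi^\times=\{\pm\varphi^k\}\), because then \(\Gamma^{-1}\) has entries in \(\Zphi\). Combined with Steps 1 and 2, this gives \(G_0^\#=G_0\).

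\emph{Step 4: the consequence.} Let \(\mathcal O\supset G_0\) be any \(\Zphi\)-order with \(\Norm(\mathcal O)\subset\Zphi\), for instance one obtained by adjoining a fractional layer. Polarization gives \(B(\mathcal O,G_0)\subset B(\mathcal O,\mathcal O)\subset\Zphi\), so \(\mathcal O\subset G_0^\#=G_0\). Hence \(\mathcal O=G_0\), and there is no strict norm-integral overorder.

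\emph{Main obstacle.} The only step with real content is the determinant in Step 3. I would expand it directly using \(\varphi^2=\varphi+1\), and I expect it to equal \(\pm1\) or \(\pm\varphi^{\pm2}\). If the explicit computation becomes messy or my entries are off, the fallback is conceptual. The icosian ring is a maximal order in the quaternion algebra over \(K\) that is ramified only at the two real places, so its reduced discriminant is the unit ideal. The discriminant of the reduced-trace form \(\operatorname{trd}(xy)\) on a maximal order is the square of the reduced discriminant, and replacing \(y\) by \(\overline y\) only changes the Gram matrix by the \(\Zphi\)-invertible matrix of conjugation on \(\mathbb I\). Hence \(\det\Gamma\) is a unit. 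A quick numerical evaluation of \(\det\Gamma\) under both real embeddings, with product \(\pm1\), would serve as a sanity check.
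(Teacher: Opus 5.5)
Your proposal follows the paper's proof step for step, and your Gram entries agree with the paper's \(G_{\mathbb I}\). The shared steps are:
\begin{enumerate}
\item the block-diagonal Gram matrix of \(B\) on \(G_0\), which you obtain by polarizing \(\Norm(a+b\ell)=\Norm_{\HH}(a)+\Norm_{\HH}(b)\);
\item the reduction to unimodularity of the icosian block, with \(\Gamma^{-1}\in M_4(\Zphi)\) once \(\det\Gamma\) is a unit;
\item the polarization argument \(B(\mathcal O,G_0)\subset\Zphi\) for the overorder consequence.
\end{enumerate}

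The one step you leave open is the determinant, which the paper computes as \(\det G_{\mathbb I}=\varphi^2=\varphi+1\). With \(t=\varphi-1\) and \(t^2+t=1\), expand along the first row. The nonzero entries \(2,1,-1\) sit in columns \(1,3,4\) and have minors \(2,-1,t-1\). This gives \(\det\Gamma=2\cdot 2+1\cdot(-1)-(-1)(t-1)=2+t=\varphi^2\). So your guess was right, and your ``main obstacle'' is a three-line computation.

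Your conceptual fallback is a genuinely different, valid justification of the same step. Since \(2\) is inert in \(K\), the algebra \((-1,-1)_K\) is ramified only at the two real places. A maximal order therefore has trivial reduced discriminant, so the \(\operatorname{trd}(xy)\)-discriminant is a unit. Twisting by the conjugation matrix, which lies in \(\mathrm{GL}_4(\Zphi)\), keeps it a unit.

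This route explains why the answer had to be a unit. However, it relies on two imported facts: that the icosian ring is maximal, and that the \(\Zphi\)-span of \(e_1,\dots,e_4\) is that ring. The paper's explicit determinant needs neither.
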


\begin{proof}
In the basis \((e_1,\ldots,e_4,e_1\ell,\ldots,e_4\ell)\) of \(G_0\) inherited from \eqref{eq:icosian-basis}, the Gram matrix of \(B\) is block diagonal with two identical \(4\times 4\) blocks \(G_{\mathbb I}\). A direct computation of \(B(e_i,e_j)=2\Rea(e_i\overline{e_j})\) gives
\begin{equation}
G_{\mathbb I}=
\begin{pmatrix}
2 & 0 & 1 & -1 \\
0 & 2 & 1 & \varphi-1 \\
1 & 1 & 2 & -1 \\
-1 & \varphi-1 & -1 & 2
\end{pmatrix},
\qquad
\det G_{\mathbb I}=\varphi^{2}=\varphi+1.
\label{eq:gram-icosian}
\end{equation}
Block-diagonality of the full pairing and \(\det G_{\mathbb I}=\varphi^{2}\) imply
\begin{equation}
G_{G_0}=G_{\mathbb I}\oplus G_{\mathbb I},
\qquad
\det G_{G_0}=(\det G_{\mathbb I})^{2}=\varphi^{4}=2+3\varphi.
\label{eq:gram-g0}
\end{equation}
The field norm of this determinant is
\begin{equation}
\Norm_{K/\QQ}(2+3\varphi)=(2+3\varphi)(5-3\varphi)=10+9\varphi-9\varphi^2=10+9\varphi-9(\varphi+1)=1,
\end{equation}
so \(\det G_{G_0}\) is a unit of \(\Zphi\). The adjugate \(\mathrm{adj}(G_{G_0})\) has entries in \(\Zphi\) because the cofactor of any minor of a matrix with entries in a commutative ring lies in that ring (see, e.g., Lang, \emph{Algebra}, Ch.~XIII~\S4 or the determinantal Cayley--Hamilton statement). Since \((\det G_{G_0})^{-1}\) lies in \(\Zphi\) by the unit computation above, the identity
\begin{equation}
G_{G_0}^{-1}=(\det G_{G_0})^{-1}\,\mathrm{adj}(G_{G_0})
\end{equation}
gives \(G_{G_0}^{-1}\in M_{8}(\Zphi)\). The pairing therefore identifies \(G_0\) with \(\operatorname{Hom}_{\Zphi}(G_0,\Zphi)\), and \(G_0^\#/G_0\) is trivial.

Now let \(G\supset G_0\) be an overorder of \(\Oct(K)\) with integral norm on every element. For \(x\in G\) and \(g\in G_0\),
\begin{equation}
B(x,g)=\Norm(x+g)-\Norm(x)-\Norm(g)\in\Zphi.
\end{equation}
Thus \(x\in G_0^\#=G_0\), and \(G=G_0\). \(\square\)
\end{proof}

The \(\ZZ\)-trace lattice gives a useful normalization check but not the proof of the preceding proposition. With respect to the trace-polar form
\begin{equation}
(x,y)_{\ZZ}=\Tr_{K/\QQ}\bigl(2\langle x,y\rangle_K\bigr),
\end{equation}
the \(\ZZ\)-lattice underlying \(\mathbb I\), as checked by numerical computation, is even and positive definite, but has determinant \(625\), not \(1\). Thus in the present normalization it is not literally the unimodular \(E_8\) lattice; the doubled trace lattice has determinant \(5^8\). The strict no-go theorem is therefore based on the stronger \(\Zphi\)-dual computation \(G_0^\#=G_0\).

\begin{proposition}[Ramified denominator check]
\label{prop:ramified-denominator}
Let \(\pi=\sqrt5=2\varphi-1\). Among all one-generator \(\pi\)-denominator gluings
\begin{equation}
G_v=G_0+\Zphi\frac{v}{\pi},
\qquad
v\in G_0/\pi G_0\simeq\mathbb F_5^8,
\end{equation}
no projective line satisfies the necessary polar-pairing condition. Hence no nonzero \(\mathbb F_5\)-subspace \(C\subset G_0/\pi G_0\) gives a strict norm-integral overmodule \(G_0+\pi^{-1}C\).
\end{proposition}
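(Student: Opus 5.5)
The plan is to derive this directly from Proposition~\ref{prop:dual-discriminant}, without enumerating the lines at all. The polar-pairing condition for a gluing \(G_v=G_0+\Zphi\,v/\pi\) requires \(B(v/\pi,g)\in\Zphi\) for every \(g\in G_0\). In other words, it requires \(v/\pi\in G_0^\#\).

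Since \(G_0^\#=G_0\), this condition becomes \(v\in\pi G_0\). That is, the class of \(v\) in \(G_0/\pi G_0\) is zero. So no nonzero class, and hence no projective line of \(G_0/\pi G_0\), passes the test. The count of surviving lines is therefore \(0\) out of \((5^8-1)/4\).

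To make the argument self-contained at the level of residues, I would reduce the Gram matrix modulo \(\pi\). Take \(v=\sum_i v_i b_i\), where \((b_i)\) is the basis \((e_1,\ldots,e_4,e_1\ell,\ldots,e_4\ell)\). The condition \(B(v,b_j)\in\pi\Zphi\) for all \(j\) says exactly that \(G_{G_0}\,v\equiv0\) over \(\Zphi/\pi\Zphi\cong\mathbb F_5\). The ideal \(\pi\Zphi\) has norm \(5\), and \(\varphi\equiv3\pmod\pi\) because \(2\varphi\equiv1\).

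By \eqref{eq:gram-g0}, \(\det G_{G_0}=\varphi^4\) is a unit. Its image in \(\mathbb F_5\) is therefore nonzero, concretely \(3^4=81\equiv1\). So \(\overline{G_{G_0}}\) is invertible over \(\mathbb F_5\), and its kernel is zero. This forces \(v\equiv0\).

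For the subspace statement, I would argue as in the denominator-two subspace obstruction. Any nonzero \(\mathbb F_5\)-subspace \(C\) contains a line. If \(G_0+\pi^{-1}C\) had integral norms, then every element \(x\) would satisfy \(B(x,g)=\Norm(x+g)-\Norm(x)-\Norm(g)\in\Zphi\). That would put the line into the set of survivors, which is empty.

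The main point to handle carefully is the identification of the polar-pairing test with membership in \(G_0^\#\) at denominator \(\pi\). In particular, the pairing must be taken with \(\Zphi\)-coefficients, so that testing against the basis suffices by \(\Zphi\)-bilinearity. I would also confirm that \(\pi\) is prime, using \(\Norm_{K/\QQ}(\pi)=-5\), so that the quotient is indeed \(\mathbb F_5^8\). Beyond that, the unimodularity of \(G_0\) does all the work. The supplementary enumeration of all lines would serve only as a consistency check.
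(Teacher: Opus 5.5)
Your proposal is correct and is essentially the paper's own argument. The paper likewise identifies the lines passing the polar-pairing filter with the radical of \(B\) reduced modulo \(\sqrt5\), asserts that this radical is trivial, and deduces the subspace statement from the line statement. You additionally justify the triviality of the radical, either directly from \(G_0^\#=G_0\) or via \(\det G_{G_0}=\varphi^4\equiv 3^4\equiv 1\pmod{\pi}\).
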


\begin{proof}
Since \(\Zphi/(\sqrt5)\cong\mathbb F_5\), there are
\begin{equation}
\frac{5^8-1}{5-1}=97656
\label{eq:f5-lines}
\end{equation}
projective lines in \(G_0/\pi G_0\). The polar pairing \(B\) modulo \(\sqrt 5\) is a non-degenerate quadratic form on \(\mathbb F_5^{8}\); the integral-pairing condition \(B(v,e_i)\in\pi\Zphi\) selects the lines lying in the radical of this reduced form, which is trivial. Hence the count of lines passing the polar-pairing filter is zero. The structural reason is therefore non-degeneracy of \(B\bmod\sqrt 5\), not exhaustive enumeration; the number \(97656\) is recorded only to confirm that the computation has visited the entire projective space. The number of mixed lines (\(97344\)) is reported for transparency but is not used: the polar-pairing filter is already decisive. Every nonzero \(\mathbb F_5\)-subspace contains a projective line, so the subspace statement follows. \(\square\)
\end{proof}

\begin{proposition}[Mixed half-root obstruction]
\label{prop:mixed-half-root}
For the direct ansatz
\begin{equation}
x=\frac{a+b\ell}{2},
\qquad a,b\in S_{H_4}\subset\mathbb I,
\label{eq:mixed-half-root}
\end{equation}
all \(14400\) raw pairs have norm \(1/2\). Hence none of them is an integral-norm element for the strict \(\Zphi\)-order convention.
\end{proposition}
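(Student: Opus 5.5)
The plan is to compute the norm of \(x\) directly from the Cayley--Dickson norm formula \eqref{eq:double-norm} and then use that \(\tfrac12\notin\Zphi\).

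First I will check that \eqref{eq:double-norm} holds on all of \(\Oct(K)=\HH(K)\oplus\HH(K)\ell\), not only on \(\mathbb G\). For \(x=a+b\ell\), the conjugate is \(\overline{x}=\overline{a}-b\ell\). Expanding \(x\overline{x}\) with the product \eqref{eq:cayley-dickson-product} gives quaternionic part
\[
a\overline{a}+\overline{b}b=\Norm_{\HH}(a)+\Norm_{\HH}(b).
\]
The \(\ell\)-part is \(-ba+ba=0\). Because \(\Norm\) is a quadratic form over \(K\), it follows that \(\Norm\bigl(\tfrac{a+b\ell}{2}\bigr)=\tfrac14\bigl(\Norm_{\HH}(a)+\Norm_{\HH}(b)\bigr)\).

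Second, every \(a,b\in S_{H_4}\) has \(\Norm_{\HH}(a)=\Norm_{\HH}(b)=1\), since \(S_{H_4}\) lies in the norm-one shell of \(\mathbb I\) by Proposition~\ref{prop:icosian-h4}. This gives \(\Norm(x)=\tfrac14(1+1)=\tfrac12\) for each of the \(120\cdot120=14400\) ordered pairs \((a,b)\). The value is uniform, so no enumeration is needed; the count only records the size of the ansatz.

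Third, I will show that \(\tfrac12\notin\Zphi\). If it were, then \(\Norm_{K/\QQ}(\tfrac12)=\tfrac14\) would be a rational integer, which is false. An equivalent argument: \(\tfrac12\) is rational but not an integer, and \(\Zphi\cap\QQ=\ZZ\). Hence no such \(x\) satisfies clause (5) of Definition~\ref{def:order} with \(R=\Zphi\). So no \(x\) can lie in any strict \(\Zphi\)-order, in particular in any overorder of \(G_0\). This is consistent with Proposition~\ref{prop:dual-discriminant}.

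The only point needing care is the first step, which depends on the sign conventions of \eqref{eq:cayley-dickson-product}. I will check the vanishing of the \(\ell\)-component directly. The argument itself is routine.
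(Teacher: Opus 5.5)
Your proposal is correct and follows essentially the same route as the paper: the Cayley--Dickson norm formula gives \(\Norm\bigl(\tfrac{a+b\ell}{2}\bigr)=\tfrac14\bigl(\Norm_{\HH}(a)+\Norm_{\HH}(b)\bigr)=\tfrac12\) uniformly for all \(120^2=14400\) pairs, and \(\tfrac12\notin\Zphi\). Your explicit check that the \(\ell\)-component of \(x\overline{x}\) vanishes, and your justification of \(\tfrac12\notin\Zphi\) via \(\Zphi\cap\QQ=\ZZ\), make explicit two steps that the paper's proof takes for granted.
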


\begin{proof}
Every element of the \(H_4\) shell has quaternionic norm \(1\). Therefore
\begin{equation}
\Norm\left(\frac{a+b\ell}{2}\right)=\frac{\Norm(a)+\Norm(b)}{4}
=\frac12.
\label{eq:mixed-half-root-norm}
\end{equation}
Since \(1/2\notin\Zphi\), such elements cannot be adjoined as strict integral-norm order elements. The numerical verification records \(14400\) pairs, \(3600\) projective denominator-two lines, and zero passing lines. \(\square\)
\end{proof}

The last obstruction is strict: it rejects elements whose \(K\)-valued norm is not in \(\Zphi\). Nevertheless, for quasicrystalline applications and for trace lattices it is natural to also consider rational trace integrality. This gives the relaxed G3-B' convention below.

\begin{definition}[Relaxed trace-integral G3-B']
\label{def:g3b}
A \emph{trace-integral golden octonion candidate} is a \(\Zphi\)-module \(G\subset\Oct(K)\), typically with \(G_0\subset G\), such that:
\begin{enumerate}
\item \(\Tr_{K/\QQ}\Norm(x)\in\ZZ\) for all \(x\in G\);
\item \(\Tr_{K/\QQ}B(x,y)\in\ZZ\) for all \(x,y\in G\), where \(B\) is the polar form \eqref{eq:norm-polar-pairing};
\item \(G\) is closed under octonion multiplication and conjugation;
\item \(G\) has a finite distinguished norm-trace shell;
\item that shell is reflection invariant and shell-indecomposable in the sense of Definition~\ref{def:indecomposable}.
\end{enumerate}
\end{definition}

\begin{remark}
\label{rem:g3b-nonvacuous}
The first four clauses of Definition~\ref{def:g3b} are non-vacuous: the icosian double \(G_0\) itself satisfies (1)--(4), with its \(240\)-element \(H_4\oplus H_4\) shell as the distinguished norm-trace shell. An obstruction to \(G_0\) being a G3-B' candidate is the indecomposability requirement (5); the propositions below show that the two natural fractional-gluing procedures cannot repair (5) while preserving (1)--(4).
\end{remark}

This relaxed problem is not covered by \(G_0^\#=G_0\), because it allows \(K\)-valued pairings and norms that are not elements of \(\Zphi\), provided their rational traces are integral. The mixed half-roots \((a+b\ell)/2\) are the first natural test objects: they fail the strict convention by having norm \(1/2\), but \(\Tr_{K/\QQ}(1/2)=1\).

\begin{lemma}[Trace-norm lattice]
Let
\begin{equation}
\Lambda=\{\alpha\in K:\Tr_{K/\QQ}(\alpha)\in\ZZ
\text{ and }\Tr_{K/\QQ}(\varphi^2\alpha)\in\ZZ\}.
\label{eq:trace-norm-lattice-definition}
\end{equation}
Then
\begin{equation}
\Lambda=\ZZ\varphi+\ZZ\frac{1}{\sqrt5}.
\label{eq:trace-norm-lattice}
\end{equation}
Consequently a \(\Zphi\)-line \(\Zphi v\), with \(\Norm(v)=\alpha\), has integral rational trace-norm on all its elements if and only if \(\alpha\in\Lambda\).
\end{lemma}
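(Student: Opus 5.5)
The plan is to compute \(\Lambda\) by writing \(\alpha\) in rational coordinates with respect to \(\{1,\varphi\}\). I then identify it with the inverse different of \(\Zphi\); that identification is what the \(\Zphi\)-line consequence needs.

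\emph{Coordinates.} Write \(\alpha=s+t\varphi\) with \(s,t\in\QQ\). By \eqref{eq:golden-trace}, \(\Tr_{K/\QQ}(\alpha)=2s+t\). Using \(\varphi^2=\varphi+1\) one gets
\[
\varphi^2\alpha=(s+t)+(s+2t)\varphi,
\qquad
\Tr_{K/\QQ}(\varphi^2\alpha)=3s+4t .
\]
So \(\Lambda\) is the preimage of \(\ZZ^2\) under the rational linear map \((s,t)\mapsto(2s+t,\,3s+4t)\), which has determinant \(5\). Hence \(\Lambda\) is a lattice in \(K\cong\QQ^2\) of covolume \(1/5\) in the \((s,t)\)-coordinates, i.e.\ it contains \(\Zphi\) with index \(5\).

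\emph{Identifying \(\Lambda\).} Both proposed generators lie in \(\Lambda\):
\begin{itemize}
\item \(\varphi\): here \((s,t)=(0,1)\), giving traces \((1,4)\);
\item \(1/\sqrt5=(2\varphi-1)/5\): here \((s,t)=(-1/5,2/5)\), giving traces \((0,1)\).
\end{itemize}
The determinant of these two coordinate vectors is \(1/5\), the same covolume as \(\Lambda\). A sublattice of equal covolume is the whole lattice, so \(\Lambda=\ZZ\varphi+\ZZ\tfrac1{\sqrt5}\), which is \eqref{eq:trace-norm-lattice}.

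I would also record that \(\Lambda=\tfrac1{\sqrt5}\Zphi\), the inverse different. Indeed \(\varphi=\sqrt5\varphi/\sqrt5\) and \(1/\sqrt5\) both lie in \(\tfrac1{\sqrt5}\Zphi\). That module also has index \(5\) over \(\Zphi\), because \(\Norm_{K/\QQ}(\sqrt5)=-5\). In particular \(\Lambda\) is a \(\Zphi\)-module, and \(\Tr_{K/\QQ}(\lambda\alpha)\in\ZZ\) for all \(\lambda\in\Zphi\) whenever \(\alpha\in\Lambda\).

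\emph{The \(\Zphi\)-line statement.} Scalars \(\lambda\in\Zphi\subset K\) are central and self-conjugate in \(\Oct(K)\), so \(\Norm(\lambda v)=\lambda^2\Norm(v)=\lambda^2\alpha\).
\begin{itemize}
\item Necessity: taking \(\lambda=1\) and \(\lambda=\varphi\) gives exactly the two defining conditions of \(\Lambda\).
\item Sufficiency: if \(\alpha\in\Lambda\), then \(\lambda^2\in\Zphi\) and the \(\Zphi\)-module property give \(\Tr_{K/\QQ}(\lambda^2\alpha)\in\ZZ\) for every \(\lambda\).
\end{itemize}

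The only delicate point is that sufficiency is not automatic from the two defining conditions alone. It needs the \(\Zphi\)-stability of \(\Lambda\), which the identification with \(\tfrac1{\sqrt5}\Zphi\) supplies. So that identification is the step I would check most carefully.
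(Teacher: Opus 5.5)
Your proof is correct. The computation of \(\Lambda\) uses the same coordinates and the same linear map \((s,t)\mapsto(2s+t,\,3s+4t)\) as the paper. The paper inverts this map explicitly, obtaining \(\alpha=m\varphi+(n-4m)/\sqrt5\) where \(m,n\) are the two traces. You instead check that both generators lie in \(\Lambda\) and compare covolumes; equivalently, \(\varphi\mapsto(1,4)\) and \(1/\sqrt5\mapsto(0,1)\) form a \(\ZZ\)-basis of \(\ZZ^2\). So this part is essentially the same.

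The real difference is in the \(\Zphi\)-line statement. The paper expands
\(\Tr_{K/\QQ}\bigl((r+s\varphi)^2\alpha\bigr)=r^2\Tr(\alpha)+2rs\,\Tr(\varphi\alpha)+s^2\Tr(\varphi^2\alpha)\)
and uses \(\Tr(\varphi\alpha)=\Tr(\varphi^2\alpha)-\Tr(\alpha)\). You instead identify \(\Lambda=\tfrac1{\sqrt5}\Zphi\) and invoke its \(\Zphi\)-stability. Your route gives the conceptual statement that \(\Lambda\) is the inverse different of \(\Zphi\), which connects the lemma to the \(\sqrt5\)-discriminant appearing later in the paper. The paper's route is shorter and purely computational.

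Your closing remark, however, overstates the difficulty: sufficiency \emph{is} automatic from the two defining conditions. Since \(\varphi=\varphi^2-1\), they give \(\Tr(\varphi\alpha)\in\ZZ\). Hence \(\Tr(x\alpha)\in\ZZ\) for every \(x\in\ZZ+\ZZ\varphi+\ZZ\varphi^2=\Zphi\), and this set contains every \(\lambda^2\). The same observation, that \(\{1,\varphi^2\}\) is already a \(\ZZ\)-basis of \(\Zphi\), shows that \(\Lambda\) is by definition the trace dual of \(\Zphi\). It is therefore automatically a \(\Zphi\)-module, without first identifying it as \(\tfrac1{\sqrt5}\Zphi\).
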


\begin{proof}
Write \(\alpha=a+b\varphi\). The two trace conditions are
\begin{equation}
m=\Tr_{K/\QQ}(\alpha)=2a+b,\qquad
n=\Tr_{K/\QQ}(\varphi^2\alpha)=3a+4b.
\label{eq:trace-norm-conditions}
\end{equation}
Solving gives
\begin{equation}
a=\frac{4m-n}{5},\qquad b=\frac{-3m+2n}{5}.
\label{eq:trace-norm-solution}
\end{equation}
Equivalently,
\begin{equation}
\alpha=m\varphi+\frac{n-4m}{\sqrt5},
\label{eq:trace-norm-generators}
\end{equation}
because \(1/\sqrt5=(-1+2\varphi)/5\). This proves the lattice formula.

For the final assertion, use the \(\ZZ\)-basis \(v,\varphi v\) of the line. The norm of \((r+s\varphi)v\) is \((r+s\varphi)^2\alpha\). The coefficients of the resulting binary trace form are \(\Tr(\alpha)\), \(2\Tr(\varphi\alpha)\), and \(\Tr(\varphi^2\alpha)\), and \(\Tr(\varphi\alpha)=\Tr(\varphi^2\alpha)-\Tr(\alpha)\). Hence the two defining conditions for \(\Lambda\) are necessary and sufficient. \(\square\)
\end{proof}

\begin{proposition}[\(H_4\) mixed half-root trace obstruction]
\label{prop:h4-mixed-trace}
The mixed half-root ansatz
\begin{equation}
v=\frac{a+b\ell}{2},\qquad a,b\in S_{H_4},
\end{equation}
does not produce a trace-integral \(\Zphi\)-module of the form \(G_0+\Zphi v\).
\end{proposition}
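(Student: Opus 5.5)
The plan is to show that the module \(G_0+\Zphi v\) already violates clause (1) of Definition~\ref{def:g3b}. No multiplication or reflection tests are needed. The tool is the trace-norm lattice lemma just proved: a \(\Zphi\)-line \(\Zphi v\) has integral rational trace-norm on all its elements if and only if \(\Norm(v)\in\Lambda=\ZZ\varphi+\ZZ\frac{1}{\sqrt5}\). Since \(\Zphi v\subset G_0+\Zphi v\), it suffices to show \(\Norm(v)\notin\Lambda\) for every choice of \(a,b\in S_{H_4}\).

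First I would compute \(\Norm(v)\) uniformly, exactly as in the proof of Proposition~\ref{prop:mixed-half-root}. By \eqref{eq:double-norm} and \(\Norm_{\HH}(a)=\Norm_{\HH}(b)=1\), every one of the \(14400\) pairs gives \(\Norm(v)=\tfrac12\). So there is only a single value \(\alpha=\tfrac12\) to test, independent of the particular roots.

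Next I would test \(\alpha=\tfrac12\) against the two defining conditions of \(\Lambda\) in \eqref{eq:trace-norm-lattice-definition}. The first passes, since \(\Tr_{K/\QQ}(\tfrac12)=1\); this is why the relaxed convention seemed promising. The second fails:
\[
\Tr_{K/\QQ}\bigl(\varphi^2\cdot\tfrac12\bigr)=\tfrac12\Tr_{K/\QQ}(\varphi+1)=\tfrac12(1+2)=\tfrac32\notin\ZZ.
\]
Concretely, the element \(\varphi v\in\Zphi v\subset G_0+\Zphi v\) has \(\Tr_{K/\QQ}\Norm(\varphi v)=\tfrac32\). This contradicts clause (1), so no such module is trace-integral.

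I do not expect a real obstacle. The only point needing care is to stress that clause (1) is required for all elements of the \(\Zphi\)-module and not merely for \(v\) itself. This is exactly what forces the second trace condition and makes the \(\varphi\)-multiple \(\varphi v\) the witness, even though \(v\) alone passes. As a cross-check, I would also note that the polar pairing gives an obstruction: \(\Tr_{K/\QQ}B(v,\varphi v)=\Tr_{K/\QQ}(2\varphi\cdot\tfrac12)=1\) is harmless, whereas the diagonal term already fails. The norm obstruction is therefore the decisive one.
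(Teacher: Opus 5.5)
Your proof is correct and follows the paper's argument: $\Norm(v)=\tfrac12$ for every pair, and $\tfrac12\notin\Lambda$, witnessed by $\Tr_{K/\QQ}\Norm(\varphi v)=\Tr_{K/\QQ}(\varphi^2/2)=\tfrac32$; your version states the uniformity over all $14400$ pairs explicitly instead of relying on the enumeration. One correction to your closing aside: the polar pairing gives no obstruction on $\Zphi v$ at all, since $B(rv,sv)=rs\in\Zphi$, so clause (1) alone does the work, as you conclude.
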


\begin{proof}
The numerical computation contains \(14400\) raw pairs and \(3600\) projective cosets. All raw pairs satisfy
\begin{equation}
\Tr_{K/\QQ}\Norm(v)=1.
\end{equation}
However, a \(\Zphi\)-module containing \(v\) also contains \(\varphi v\). For the representatives used in the numerical computation, the first structural failure is
\begin{equation}
\Norm(\varphi v)=\frac12+\frac12\varphi,\qquad
\Tr_{K/\QQ}\Norm(\varphi v)=\frac32.
\end{equation}
Equivalently, this is the obstruction \(1/2\notin\Lambda\) from the trace-norm lattice lemma, since \(\Tr_{K/\QQ}(\varphi^2/2)=3/2\). Consequently no projective coset generates a trace-integral \(\Zphi\)-module. The numerical verification records \(324\) raw candidates, or \(81\) projective cosets, passing the first polar-pairing filter with \(G_0\), but zero module survivors. \(\square\)
\end{proof}

\begin{proposition}[G3-B' discriminant-tower no-go]
\label{prop:g3-b-tower-no-go}
Inside the trace-polar \(\ZZ\)-dual quotient of \(G_0=\mathbb I+\mathbb I\ell\), there is no nonzero trace-integral octonion-stable gluing subspace. Equivalently, the first \(\sqrt5\)-denominator discriminant tower over the weak icosian double contains no G3-B' candidate.
\end{proposition}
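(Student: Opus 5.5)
The plan is to make the trace-polar $\ZZ$-dual of $G_0$ explicit, reduce the problem to finitely many subspaces of a small discriminant group, and then test those subspaces against the integrality and closure conditions of Definition~\ref{def:g3b}.

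\textbf{Computing the dual and its discriminant group.} Let $G_0^{\vee}$ be the dual of $G_0$ for the form $(x,y)_{\ZZ}=\Tr_{K/\QQ}B(x,y)$. Since $B$ is $\Zphi$-bilinear and, by Proposition~\ref{prop:dual-discriminant}, $\Zphi$-unimodular on $G_0$, I expect
\begin{equation*}
G_0^{\vee}=\mathfrak d^{-1}G_0=\tfrac{1}{\sqrt5}\,G_0 .
\end{equation*}
Here $\mathfrak d=(\sqrt5)$ is the different of $\Zphi$: the $\Tr_{K/\QQ}$-dual of $\Zphi$ is $\frac1{\sqrt5}\Zphi$, which is consistent with the determinant $5^8$ recorded above. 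It follows that
\begin{equation*}
D=G_0^{\vee}/G_0\cong G_0/\sqrt5\,G_0\cong\mathbb F_5^{8}.
\end{equation*}
This is exactly the ``first $\sqrt5$-denominator tower''. Any trace-integral module $G$ with $G_0\subset G$ satisfies $G\subset G_0^{\vee}$, because clause (2) of Definition~\ref{def:g3b} forces $\Tr_{K/\QQ}B(x,G_0)\subset\ZZ$. So gluings correspond to $\Zphi$-submodules of $D$, which are $\mathbb F_5$-subspaces $C$, and $G_C=G_0+\frac1{\sqrt5}\widetilde C$ with $\widetilde C$ a lift of $C$.

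\textbf{Isotropy condition.} The next step is to impose clauses (1)--(2). For $x=v/\sqrt5$ we have $\Norm(x)=\Norm(v)/5$, and requiring $\Tr_{K/\QQ}(\lambda^2\Norm(v)/5)\in\ZZ$ for all $\lambda\in\Zphi$ reduces modulo $\sqrt5$ to a condition on the reduced form. Since $\frac15\Zphi\cap\Lambda$ is controlled by the trace-norm lattice lemma, I expect the clean condition to be that $\Norm(v)\in\sqrt5\,\Zphi$. Similarly, $\Tr B(x,y)\in\ZZ$ for $x,y\in G_C$ becomes $B(v,w)\in\sqrt5\,\Zphi$. Hence $C$ must be totally isotropic for $\bar Q=\Norm\bmod\sqrt5$ on $\mathbb F_5^8$. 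By Proposition~\ref{prop:ramified-denominator} this form is non-degenerate, and it splits as two copies of the reduced icosian form, so isotropic subspaces exist, of dimension at most $4$. The bare trace-integral conditions therefore do not kill everything; the obstruction has to come from octonion stability.

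\textbf{Octonion stability (main obstacle).} The remaining step is to show that no nonzero totally isotropic $C$ is stable under multiplication and conjugation. Multiplication by $G_0$ descends to an action of $G_0/\sqrt5G_0=\mathbb I/\sqrt5\mathbb I\oplus(\mathbb I/\sqrt5\mathbb I)\ell$ on $D$. Conjugation also descends, and preserves $\bar Q$. Closure of $G_C$ under products forces three things: $C$ is stable under left and right multiplication by all of $G_0$; $C$ is closed under conjugation; and $\frac1{\sqrt5}c\cdot\frac1{\sqrt5}c'=\frac15cc'$ lies in $G_C$, which requires $cc'\equiv0\bmod\sqrt5$.

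The plan for this step is as follows. First identify $\mathbb I/\sqrt5\mathbb I$: since $\sqrt5$ divides the discriminant of the icosian ring, this quotient is either $M_2(\mathbb F_5)$ or a local non-split algebra, and this should be computed. The reduced algebra $\mathbb G/\sqrt5\mathbb G$ is then a reduced octonion algebra over $\mathbb F_5$ or a degenerate one. Two-sided submodules of an octonion algebra that are also closed under conjugation are very restricted. If the reduction is a split octonion algebra it is simple, and any nonzero stable $C$ would be everything, which is not isotropic. In the degenerate case I would determine the radical, which should be an ideal, and check directly that it fails $cc'\equiv0$ or isotropy.

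If this structural route stalls, the fallback is a finite check. It enumerates the totally isotropic subspaces of $\mathbb F_5^8$ containing a given line, which is exactly the $97656$ lines together with the isotropic filter. For each line it computes the submodule generated under the eight basis multiplications and conjugation, and tests whether that submodule is still isotropic. Every nonzero candidate $C$ contains the stable closure of one of its lines, so showing that every line's stable closure is non-isotropic finishes the proof.
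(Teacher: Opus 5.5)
Your proof is correct, but its decisive step takes a different route from the paper's. The paper argues entirely by machine. It computes $G_{0,\ZZ}^{\#}/G_0\simeq(\ZZ/5\ZZ)^8$ numerically, classifies the discriminant form as $O^+(8,5)$, enumerates its $19656$ isotropic lines, and checks that the closure of each line under conjugation and two-sided multiplication by a $\Zphi$-basis of $G_0$ is $8$-dimensional. That is exactly your fallback.

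Your first two steps derive the same data by hand. The equality $G_0^{\vee}=\frac{1}{\sqrt5}G_0$ follows from the different of $\Zphi$ together with $G_0^\#=G_0$; this also explains why $\varphi$ acts on the quotient as the scalar $3$. The trace conditions then reduce to total isotropy of $C$ for $\Norm \bmod \sqrt5$. For the stability step, identifying $D$ with $\overline{\mathbb G}=G_0/\sqrt5G_0$ as a bimodule turns a stable $C$ into a two-sided ideal of $\overline{\mathbb G}$.

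One correction is needed here. $\sqrt5$ does \emph{not} divide the discriminant of $\mathbb I$ over $\Zphi$, since $\det G_{\mathbb I}=\varphi^2$ is a unit; the factor $625=5^4$ in the trace determinant comes only from the different of $\Zphi$. Hence $\mathbb I/\sqrt5\mathbb I\cong M_2(\mathbb F_5)$, and your ``degenerate'' branch never occurs. You already had the ingredients to see this, since you noted that $\Norm\bmod\sqrt5$ is nondegenerate. A unital $8$-dimensional $\mathbb F_5$-algebra with a multiplicative norm whose polar form is nondegenerate is a Cayley algebra (here the split one), and Cayley algebras are simple. So $C\in\{0,\overline{\mathbb G}\}$, and $\overline{\mathbb G}$ is not totally isotropic.

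Each route buys something different. Yours is computer-free and explains \emph{why} every closure in the paper's search is $8$-dimensional: the reason is simplicity of $\overline{\mathbb G}$, not anything special about isotropic lines. It also makes the isotropic-line count, the $O^+(8,5)$ classification, the conjugation test and your $cc'\equiv0$ condition superfluous. It applies verbatim to any $\Zphi$-unimodular octonion order. The paper's enumeration, in exchange, gives an explicit certificate that does not rely on the structure theory of composition algebras over finite fields.
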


\begin{proof}
The trace-polar discriminant quotient, obtained by numerical computation, is
\begin{equation}
G_{0,\ZZ}^{\#}/G_0\simeq(\ZZ/5\ZZ)^8.
\label{eq:g0-trace-discriminant}
\end{equation}
On this quotient the induced quadratic form is
\begin{equation}
q(t/5)=\frac{t^T G_{\ZZ}t}{50}\pmod{\ZZ}.
\label{eq:g0-discriminant-quadratic-form}
\end{equation}
The numerical verification identifies this form as split plus type \(O^+(8,5)\), of hyperbolic rank \(4\). It has \(97656\) projective lines and \(19656\) isotropic projective lines. Multiplication by \(\varphi\) acts as scalar \(3\), and multiplication by \(\sqrt5\) annihilates the quotient.

For each isotropic projective line, the numerical computation generates the smallest \(\mathbb F_5\)-subspace stable under conjugation and under left and right multiplication by the chosen \(\Zphi\)-basis of \(G_0\). In every one of the \(19656\) cases this stable closure has dimension \(8\), namely it is the whole discriminant quotient. The whole quotient is not totally isotropic. Therefore no nonzero isotropic subspace can also be stable under the octonionic operations required for an overmodule. The stable-subspace and candidate counts in the numerical verification are both zero.
Thus the G3-B' search inside the icosian-double discriminant tower has no candidate. This does not exclude golden octonion orders built from a different ambient order or from a different finite-shell convention. \(\square\)
\end{proof}

\section{Applications to quasicrystals, aperiodic algebras and physics}
\label{sec:applications}

The appearance of \(\Zphi\) is not accidental. Inflation by the golden ratio is one of the basic symmetries of Penrose tilings and icosahedral quasicrystals. The algebraic role of \(\Zphi\) is therefore the arithmetic counterpart of a geometric scaling symmetry.

It is of paramount importance to notice that the physical claim of the present work is modest. We do not claim that non-crystallographic integers directly produce a fundamental theory of matter. Rather, they provide exact arithmetic data which can be transported into quasicrystalline and aperiodic models. The useful chain is
\begin{equation}
\begin{gathered}
\text{root shell}\longrightarrow\text{cut-and-project module}\\
\longrightarrow\text{reciprocal module}\longrightarrow\text{computable observable}.
\end{gathered}
\label{eq:physics-pipeline}
\end{equation}
In this sense the finite shell is the algebraic input, while diffraction peaks, spectra, density of states, inverse participation ratios and phason modes are the physical outputs. The pipeline is sketched in Fig.~\ref{fig:physics-pipeline}.

\begin{figure}[htbp]
\centering
\begin{tikzpicture}[
  every node/.style={font=\small},
  box/.style={
    rectangle, rounded corners=4pt, draw=black, very thick,
    align=center, minimum height=1.4cm, minimum width=2.5cm,
    inner sep=4pt, fill=white,
  },
  flow/.style={-{Latex[length=3mm]}, very thick, black},
]
\node[box] (a) at (0,0)
  {root shell\\$H_2,\,H_3,\,H_4,\,\mathbb I,\,\mathbb G$\\over $\Zphi$};
\node[box] (b) at (4.0,0)
  {cut-and-project\\module $M\subset\RR^{d}$};
\node[box] (c) at (8.0,0)
  {reciprocal\\module $M^{*}$};
\node[box] (d) at (12.2,0)
  {observable\\$I(k),\,\mathrm{DOS},\,\mathrm{IPR},$\\phason modes};

\draw[flow] (a) -- (b);
\draw[flow] (b) -- (c);
\draw[flow] (c) -- (d);

\node[font=\footnotesize\itshape, black] at (0,-1.3) {arithmetic input};
\node[font=\footnotesize\itshape, black] at (4.0,-1.3) {physical patch};
\node[font=\footnotesize\itshape, black] at (8.0,-1.3) {Bragg labels};
\node[font=\footnotesize\itshape, black] at (12.2,-1.3) {computable physics};
\end{tikzpicture}
\caption{\emph{The operational pipeline from non-crystallographic arithmetic to physical observables. A finite root shell over $\Zphi$ is promoted to a cut-and-project module in physical space; its reciprocal module supplies the labels of diffraction peaks; and tight-binding spectra, densities of states, inverse participation ratios and phason modes are computed on the resulting aperiodic point set.}}
\label{fig:physics-pipeline}
\end{figure}

\subsection{Quasicrystal modules and diffraction}

For \(H_2\), the cyclotomic shell \(\ZZ[\zeta_{10}]\) and its real subring \(\Zphi\) give the rank-two algebraic structure behind decagonal symmetry. After choosing physical and internal spaces, this is the standard cut-and-project setting for Penrose-type tilings \cite{DeBr81,Pen89,Aperiodic}. In particular, the multiplication by \(\varphi\) gives the inflation symmetry, whereas the \(\ZZ\)-module generated by the shell gives the possible wave-vectors of the reciprocal module.

If \(\Lambda\) is a finite patch obtained from such a cut-and-project set, a first observable is the numerical diffraction intensity
\begin{equation}
I(k)=\left|\sum_{x\in\Lambda}w_x e^{-2\pi i k\cdot x}\right|^2,
\label{eq:diffraction-intensity}
\end{equation}
where \(w_x\) is the scattering weight at the point \(x\). Formula \eqref{eq:diffraction-intensity} is not new by itself. What the present formalism adds is an exact arithmetic control of the allowed \(k\)-module and of its scaling under \(\varphi\). The expected pattern is sketched in Fig.~\ref{fig:decagonal-diffraction}.

\begin{figure}[htbp]
\centering
\begin{tikzpicture}[
  every node/.style={font=\small},
  scale=1.6,
  peakbig/.style={circle, fill=black, inner sep=1.2pt},
  peakmed/.style={circle, fill=black, inner sep=0.75pt},
  peaksml/.style={circle, fill=black, inner sep=0.45pt},
  scale10/.style={->, black!60, thick, font=\footnotesize},
]
\pgfmathsetmacro{\PH}{(1+sqrt(5))/2}
\pgfmathsetmacro{\IPH}{1/\PH}

\draw[black!25, very thin] (-2.8,-2.8) rectangle (2.8,2.8);

\foreach \k in {0,...,9} {
  \pgfmathsetmacro{\ang}{\k*36}
  \node[peakmed] at ({\PH*cos(\ang)},{\PH*sin(\ang)}) {};
}
\foreach \k in {0,...,9} {
  \pgfmathsetmacro{\ang}{\k*36 + 18}
  \node[peakbig] at ({1.0*cos(\ang)},{1.0*sin(\ang)}) {};
}
\foreach \k in {0,...,9} {
  \pgfmathsetmacro{\ang}{\k*36}
  \node[peakmed] at ({\IPH*cos(\ang)},{\IPH*sin(\ang)}) {};
}
\foreach \k in {0,...,9} {
  \pgfmathsetmacro{\ang}{\k*36 + 18}
  \pgfmathsetmacro{\rr}{\PH*\PH}
  \node[peaksml] at ({\rr*cos(\ang)},{\rr*sin(\ang)}) {};
}
\foreach \k in {0,...,9} {
  \pgfmathsetmacro{\ang}{\k*36 + 9}
  \node[peaksml] at ({0.66*cos(\ang)},{0.66*sin(\ang)}) {};
  \node[peaksml] at ({1.3*cos(\ang)},{1.3*sin(\ang)}) {};
  \node[peaksml] at ({2.1*cos(\ang)},{2.1*sin(\ang)}) {};
}
\node[peakbig] at (0,0) {};

\draw[scale10] (0,0) -- ({\PH*cos(60)},{\PH*sin(60)});
\node[font=\footnotesize, anchor=west, black] at ({\PH*cos(60)},{\PH*sin(60)+0.2}) {$|k|=\varphi$};
\draw[scale10] (0,0) -- ({(\PH*\PH)*cos(120)},{(\PH*\PH)*sin(120)});
\node[font=\footnotesize, anchor=east, black] at ({(\PH*\PH)*cos(120)},{(\PH*\PH)*sin(120)+0.2}) {$|k|=\varphi^{2}$};

\draw[->, black!35, thin] (-2.8,0) -- (2.95,0) node[right, font=\footnotesize, black]{$k_x$};
\draw[->, black!35, thin] (0,-2.8) -- (0,2.95) node[above, font=\footnotesize, black]{$k_y$};
\end{tikzpicture}
\caption{\emph{Stylized decagonal Bragg diffraction pattern produced by a $\Zphi$-indexed quasicrystal point set. Peaks are located on concentric ten-fold orbits with radii in the multiplicative chain $\ldots,\varphi^{-1},1,\varphi,\varphi^{2},\ldots$; peak intensities decay with radius and reflect the $\varphi$-inflation symmetry of the underlying cut-and-project module. The ten-fold rotational symmetry forbids any periodic lattice indexation but is naturally captured by the reciprocal $\Zphi$-module.}}
\label{fig:decagonal-diffraction}
\end{figure}

The three-dimensional system \(H_3\) has a more direct physical interpretation. Its \(30\) roots give the icosidodecahedral star vectors used to model icosahedral orientation data and diffraction directions. The four-dimensional system \(H_4\), on the other hand, should be viewed as the arithmetic parent. The icosian shell gives the \(600\)-cell, and projections from it recover three-dimensional icosahedral clusters and reciprocal stars. Moody and Patera showed that icosians provide a natural arithmetic framework for quasicrystals \cite{Icosians}. In the present language, this means that the icosian order stores the finite shell, while the shell stores the non-crystallographic reflection geometry.

\begin{remark}
The usual slogan that ``icosians give \(E_8\)'' hides a normalization issue which is relevant for applications. In the trace form used in our numerical computation, the icosian lattice is \(5\)-modular rather than unimodular. Therefore a physical model which is sensitive to discriminant data should not identify it blindly with an \(E_8\) lattice. The root data may agree, while the arithmetic layer differs.
\end{remark}

\subsection{Spectral and elastic models}

Once a finite quasicrystal patch has been generated from a non-crystallographic integer system, one can pass from geometry to dynamics. The most immediate model is a tight-binding Hamiltonian on the adjacency graph of the patch:
\begin{equation}
H=-t\sum_{\langle x,y\rangle}|x\rangle\langle y|
\;+\;\sum_x V_x |x\rangle\langle x|.
\label{eq:tight-binding}
\end{equation}
Here the edge relation \(\langle x,y\rangle\) comes from the tiling or from a chosen distance shell, and \(V_x\) may depend on the local aperiodic environment. The quantities to compute are then the spectrum, the density of states and the inverse participation ratio
\begin{equation}
\operatorname{IPR}(\psi)=\sum_x|\psi(x)|^4.
\label{eq:ipr}
\end{equation}
The role of \(\Zphi\) is to give exact labels for inflation, local environments and candidate gap labels.

Similarly, elastic or phononic models can be built from the same point sets. A discrete harmonic-spring energy has the form
\begin{equation}
E(u)=\sum_{\langle x,y\rangle}k_{xy}
\left(|u_x-u_y|-\ell_{xy}\right)^{2},
\label{eq:elastic-energy}
\end{equation}
or, in its linearized version about the rest configuration,
\begin{equation}
E_{\mathrm{lin}}(u)=\tfrac12\sum_{\langle x,y\rangle}k_{xy}\bigl(\hat n_{xy}\cdot(u_x-u_y)\bigr)^{2},
\end{equation}
where \(\hat n_{xy}\) is the unit vector along the equilibrium bond.
In a periodic crystal, the normal modes are organized by a reciprocal lattice. In a quasicrystal, the reciprocal object is a module, and the internal cut-and-project space gives phason degrees of freedom. The arithmetic developed above supplies exact coordinates for both the physical and internal components.

\subsection{Aperiodic algebras}

There is a second, more algebraic, direction. Aperiodic Lie and Jordan algebras built over quasicrystalline point sets can be viewed as later layers over the same arithmetic \cite{CCAI23,CCAI23b,CCAI23c}. The present framework clarifies what the input should be: not a vague non-periodic set, but a finite non-crystallographic shell together with its \(\Zphi\)-Cartan coefficients and its cut-and-project module.

In analogy to the octonionic case, the icosian double gives a natural next test object. The order \(G_0=\mathbb I+\mathbb I\ell\) is genuinely octonionic, since its associator is nonzero, but its visible shell is still \(H_4\oplus H_4\). Thus the physically honest interpretation is not a new particle model, but a non-associative toy model of two coupled icosian sectors. The basic observable is the associator
\begin{equation}
A(x,y,z)=(xy)z-x(yz),
\label{eq:associator-observable}
\end{equation}
measured on triples of shell elements. Comparing the distribution of \(\Norm(A(x,y,z))\) with the quaternionic \(H_4\) case would isolate precisely what is new in the octonionic double.

\begin{remark}
This distinction is important. The strict and relaxed G3 searches above are negative inside the present icosian-double tower. Therefore the weak golden octonion order should be used in physics as a controlled non-associative model, not as evidence for an indecomposable golden Coxeter--Dickson shell.
\end{remark}

\subsection{Practical hierarchy of applications}

The roadmap emerging from the previous discussion is the following.

\begin{table}[htbp]
\centering
\resizebox{\textwidth}{!}{%
\begin{tabular}{llll}
\toprule
Direction & Arithmetic input & Observable & Status \\
\midrule
Penrose/decagonal models & \(H_2\), \(\ZZ[\zeta_{10}]\), \(\Zphi\) & diffraction, inflation, tight-binding spectrum & safest first target \\
Icosahedral clusters & \(H_3\) shell & star vectors, orientation states, 3D diffraction & direct physical geometry \\
Icosian quasicrystals & \(H_4\), \(\mathbb I\), trace lattice & projections, reciprocal modules, \(K_8/E_8\) comparison & arithmetic refinement \\
Aperiodic algebras & \(H_2,H_3,H_4\) modules & brackets, Jordan products, derivations & CCAI continuation \\
Golden octonion toy models & \(G_0= \mathbb I+\mathbb I\ell\), \(H_4\oplus H_4\) & associator distribution, coupled sectors & speculative but testable \\
\bottomrule
\end{tabular}
}
\caption{\emph{This table summarizes the hierarchy of physical applications suggested by the non-crystallographic integer framework.}}
\label{tab:physics-applications}
\end{table}

The first publishable physical application should therefore be the \(H_2\) Penrose case: it is the most concrete, it has a standard cut-and-project interpretation, and it immediately gives numerical observables such as \eqref{eq:diffraction-intensity}, \eqref{eq:tight-binding} and \eqref{eq:ipr}. The \(H_3\) and \(H_4\) layers then extend the same arithmetic to icosahedral geometry. Only after these cases have been computed should the weak golden octonion order be used for physical toy models.

\section{Conclusions and Future Developments}

In this work we defined a root-shell formulation of non-crystallographic systems of integers over composition algebras. The construction recovers the classical crystallographic examples, explains the role of finite norm shells, and shows why \(\Zphi\) is the natural coefficient ring for the \(H_2\), icosian \(H_4\), and icosian-double \(H_4\oplus H_4\) examples. In particular, the icosian double gives a genuine octonionic \(\Zphi\)-order, albeit with a decomposable \(H_4\oplus H_4\) shell.

The analogy with the octonionic case is preserved once the lattice requirement is replaced by the root-shell requirement. Coxeter--Dickson octonions recover the crystallographic shell \(E_8\); icosians recover the non-crystallographic shell \(H_4\). The cost of the analogy is the requirement to distinguish four objects which the crystallographic case allows one to conflate: the full order, the finite shell, the multiplicative unit object and the arithmetic coefficient ring.

From the physical point of view, the most robust application is the quasicrystalline one. The \(H_2\) shell gives exact Penrose arithmetic, \(H_3\) gives icosahedral star vectors, and \(H_4\) gives the icosian arithmetic parent. This produces concrete observables such as diffraction intensities, reciprocal-module labels, tight-binding spectra and phason modes. The weak golden octonion order enters only later, as a controlled non-associative model of coupled icosian sectors through the associator.

A natural next direction is to ask whether a stronger golden Coxeter--Dickson analogue can be made precise outside the present icosian-double tower. The strict overorder problem over \(G_0\) is closed by \(G_0^\#=G_0\), and the first trace-integral discriminant tower has no octonion-stable isotropic gluing. The remaining open directions are therefore: different ambient golden octonion orders, twisted Cayley--Dickson doubles, neighbor constructions in the sense of Kneser, and a first physical implementation of the \(H_2\) Penrose pipeline. The present article fixes the arithmetic language before further work can address the physical and octonionic ramifications.

\section*{Acknowledgments}

The author thanks Raymond Aschheim for discussions on integral numbers, and Richard Clawson,
David Chester, and Klee Irwin for discussions on lattices and root systems.

\section*{Computational methods}

All numerical statements quoted in the text rest on the same exact-arithmetic computational pipeline, implemented in Python~3.11 with no floating-point dependence. Elements of \(\Zphi\) are represented as pairs of integers \((a,b)\) standing for \(a+b\varphi\), with multiplication reduced by \(\varphi^2=\varphi+1\). Elements of \(K=\QQ(\sqrt5)\) are represented as pairs of \texttt{fractions.Fraction} values. The icosian ring \(\mathbb I\) is represented by the basis \eqref{eq:icosian-basis}; quaternionic multiplication and conjugation are encoded by their \(4\times 4\) structure constants in \(\Zphi\). The icosian double \(G_0=\mathbb I\oplus\mathbb I\ell\) is represented by the \(8\)-tuple of \(\Zphi\)-coefficients, with multiplication implemented via the Cayley--Dickson formula \eqref{eq:cayley-dickson-product}.

The verification protocol consists of the following exact checks.

\begin{enumerate}
\item[(P1)] Closure of \(\mathbb I\) and \(G_0\) under multiplication and conjugation; integrality of \(\Tr_A\) and \(\Norm_A\) on every basis pair.
\item[(P2)] Enumeration of the \(120\)-element \(H_4\) shell of \(\mathbb I\) by exhaustive search over the bounded coordinate box dictated by \eqref{eq:h4-roots}, followed by verification of \(S_{H_4}=-S_{H_4}\) and of the reflection action.
\item[(P3)] Computation of the Gram matrix \(G_{\mathbb I}\) of the polar pairing, including bit-by-bit comparison with the matrix displayed in \eqref{eq:gram-icosian}, computation of its determinant \(\det G_{\mathbb I}=\varphi^{2}\), and of the inverse matrix \(G_{\mathbb I}^{-1}\); confirmation that \(G_{\mathbb I}^{-1}\) has entries in \(\Zphi\) by direct cofactor expansion.
\item[(P4)] For the strict denominator-two G3 search: enumeration of all \(21845\) projective lines of \((\Zphi/2\Zphi)^{8}\simeq\mathbb F_{4}^{8}\), classification by the (mixed, conjugation-closed, integrally paired) filters, and confirmation of zero survivors.
\item[(P5)] For the ramified \(\sqrt5\) search: enumeration of all \(97656\) projective lines of \((\Zphi/\sqrt5\Zphi)^{8}\simeq\mathbb F_{5}^{8}\) and confirmation of zero survivors of the polar-pairing filter.
\item[(P6)] For the trace-polar discriminant tower: computation of \(G_{0,\ZZ}^{\#}/G_0\simeq(\ZZ/5\ZZ)^{8}\), classification of the induced quadratic form as split type \(O^{+}(8,5)\) (hyperbolic rank \(4\)), enumeration of the \(19656\) isotropic projective lines, and verification that each generates a closure of full dimension under conjugation and multiplication by the \(\Zphi\)-basis.
\end{enumerate}

The implementation uses only the Python standard library (in particular \texttt{fractions.Fraction} and the built-in \texttt{int}); no third-party dependencies are required. The complete source code, the integer certificates produced by each check, and a SHA-256 hash of the certificate archive will be deposited as a supplementary file accompanying this article; running the verification end-to-end takes under one minute on a standard laptop. Independent reproduction of all numerical statements amounts to executing the supplied verification script and comparing the certificate hash.

\end{document}